\newcommand{\Z}{\mathbb{Z}}
\newcommand{\R}{\mathbb{R}}
\newcommand{\Ker}{\mathop\mathrm{Ker}\nolimits}
\newcommand{\coKer}{\mathop\mathrm{coKer}\nolimits}
\newcommand{\ind}{\mathop\mathrm{ind}\nolimits}
\newcommand{\sign}{\mathop\mathrm{sign}\nolimits}
\newcommand{\Iso}{\mathrm{Iso}}
\newcommand{\GL}{\mathrm{GL}}
\newcommand{\A}{\mathcal{A}}
\newcommand{\M}{\mathcal{M}}
\newcommand{\N}{\mathcal{N}}
\newcommand{\K}{\mathcal{K}}
\newcommand{\e}{\varepsilon}
\newcommand{\s}{\sigma}
\newcommand{\per}{\!\times\!}
\newcommand{\pallino}{\smallskip $\bullet$\;}
\newcommand{\F}{\mathcal{F}}
\newcommand{\g}{\gamma}
\newcommand{\G}{\Gamma}
\renewcommand{\L}{\mathcal{L}}
\renewcommand{\S}{\mathrm{S}}
\renewcommand{\Im}{\mathop\mathrm{Img}\nolimits}
\renewcommand{\l}{\lambda}
\renewcommand{\a}{\alpha}
\renewcommand{\b}{\beta}
\renewcommand{\o}{\omega}
\renewcommand{\O}{\Omega}
\renewcommand{\t}{\theta}
\theoremstyle{plain}
\newtheorem{theorem}{Theorem}[section]
\newtheorem{corollary}[theorem]{Corollary}
\newtheorem{proposition}[theorem]{Proposition}
\newtheorem{conjecture}[theorem]{Conjecture}
\newtheorem{remark}[theorem]{Remark}
\theoremstyle{definition}
\newtheorem{notation}[theorem]{Notation}
\newtheorem{definition}[theorem]{Definition}
\newtheorem{example}[theorem]{Example}
\numberwithin{equation}{section}
\begin{document}
\title[Global persistence of the unit eigenvectors]{Global persistence of the unit eigenvectors of perturbed eigenvalue problems in Hilbert spaces: the odd multiplicity case}

\author[P.\ Benevieri]{Pierluigi Benevieri}
\author[A.\ Calamai]{Alessandro Calamai}
\author[M.\ Furi]{Massimo Furi}
\author[M.P.\ Pera]{Maria Patrizia Pera}

\thanks{The first, second and fourth authors are members of the Gruppo Nazionale per l'Analisi Mate\-ma\-tica, la Probabilit\`a e le loro Applicazioni (GNAMPA) of the Istituto Nazionale di Alta Mate\-ma\-tica (INdAM)}
\thanks{A.\ Calamai is partially supported by GNAMPA\ - INdAM (Italy)}

\date{\today}

\address{Pierluigi Benevieri -
Instituto de Matem\'atica e Estat\'istica,
Universidade de S\~ao Paulo,
Rua do Mat\~ao 1010,
S\~ao Paulo - SP - Brasil - CEP 05508-090 -
 {\it E-mail address: \tt
pluigi@ime.usp.br}}
\address{Alessandro Calamai -
Dipartimento di Ingegneria Civile, Edile e Architettura,
Universit\`a Politecnica delle Marche,
Via Brecce Bianche,
I-60131 Ancona, Italy -
 {\it E-mail address: \tt
calamai@dipmat.univpm.it}}
\address{Massimo Furi - Dipartimento di Matematica e Informatica ``Ulisse Dini'',
Uni\-ver\-sit\`a degli Studi di Firenze,
Via S.\ Marta 3, I-50139 Florence, Italy -
{\it E-mail address: \tt
massimo.furi@unifi.it}}
\address{Maria Patrizia Pera - Dipartimento di Matematica e Informatica ``Ulisse Dini'',
Universit\`a degli Studi di Firenze,
Via S.\ Marta 3, I-50139 Florence, Italy -
{\it E-mail address: \tt
mpatrizia.pera@unifi.it}}

\begin{abstract}
We study the persistence of eigenvalues and eigenvectors 
of perturbed eigenvalue problems in Hilbert spaces.
We assume that the unperturbed problem has a nontrivial kernel of 
odd dimension and we prove a Rabinowitz-type global continuation result.

The approach is topological, based on a notion of degree for oriented Fredholm maps of index zero between real differentiable Banach manifolds.
\end{abstract}

\keywords{eigenvalues, eigenvectors, nonlinear spectral theory, topological degree, bifurcation}

\subjclass[2010]{47J10, 47A75, 47H11, 55M25}


\maketitle

\section{Introduction}
\label{Introduction}

Nonlinear spectral theory is a research field of increasing interest,
which finds application to properties of the structure of the solution set of differential equations, see e.g.\ \cite{ADV,Chi2018}.
In this context a nontrivial question consists in studying nonlinear perturbations of linear problems and in investigating the so-called ``persistence'' of eigenvalues and eigenvectors.

More precisely, let $G$ and $H$ denote two real Hilbert spaces.
By a ``perturbed eigenvalue problem'' we mean a system of the following type:
\begin{equation}
\label{perturbed eigenvalue problem intro}
\left\{
\begin{aligned}
&Lx + s N(x) = \l Cx\\
&x \in \S,
\end{aligned}\right.
\end{equation}
where $s,\l$ are real parameters, $L,C \colon G \to H$ are bounded linear operators, $\S$ denotes the unit sphere of $G$, and $N\colon \S \to H$ is a nonlinear map.
We call \emph{solution} of \eqref{perturbed eigenvalue problem intro} a triple $(s,\l,x) \in \R\per\R\per\S$ satisfying the above system.
The element \mbox{$x \in \S$} is then said a \emph{unit eigenvector} corresponding to the \emph{eigenpair} $(s,\l)$ of \eqref{perturbed eigenvalue problem}, and 
the set of solutions of \eqref{perturbed eigenvalue problem intro} will be denoted by $\Sigma\subseteq \R\per\R\per\S$.

To investigate the topological properties of $\Sigma$,
 we consider \eqref{perturbed eigenvalue problem intro} as a (nonlinear) perturbation of
the eigenvalue problem
\begin{equation}
\label{eigenvalue problem intro}
\left\{
\begin{aligned}
&Lx = \l Cx\\
&x \in \S,
\end{aligned}\right.
\end{equation}
where we assume that the operator $L - \l C \in \L(G,H)$ is invertible for some $\l \in \R$.
When $\l \in \R$ is such that $\Ker(L - \l C)$ is nontrivial, we call $\l$ an \emph{eigenvalue} of the equation $L = \l C$ or, equivalently, of problem~\eqref{eigenvalue problem intro}.
A solution $(\l,x)$ of \eqref{eigenvalue problem intro} will be called an \emph{eigenpoint};
in this case $\l$ and $x$ are, respectively, an eigenvalue and a \emph{unit eigenvector} of the equation $Lx = \l Cx$.

Let $(\l_*,x_*)$ be an eigenpoint of \eqref{eigenvalue problem intro}
and suppose that the following conditions hold:
\begin{itemize}
\item[(H1)]
$C$ is a compact operator,
\item[(H2)]
$\Ker (L-\l_*C)$ is odd dimensional,
\item[(H3)]
$\Im (L-\l_*C) \cap C\left(\Ker (L-\l_*C)\right) = \{0\}$.
\end{itemize}
Under assumptions (H1)--(H3) our main result, Theorem
\ref{main result} below, asserts that
\begin{itemize}
\item
\emph{in the set $\Sigma$ of the solutions of \eqref{perturbed eigenvalue problem intro}, the connected component containing $(0,\l_*,x_*)$ is either unbounded or includes a trivial solution $(0,\l^*,x^*)$ with $\l^* \not= \l_*$.}
\end{itemize}

The proof of Theorem \ref{main result}, which can be thought of as a Rabinowitz-type global continuation result \cite{Ra}, 
 is based on a preliminary study of the ``unperturbed'' problem \eqref{eigenvalue problem intro}.
In particular, notice that the eigenpoints of \eqref{eigenvalue problem intro} coincide with the solutions of the equation
\[
\psi(\l,x) = 0,
\]
where $\psi$ is the $H$-valued function $(\l,x) \mapsto Lx-\l Cx$ defined on the cylinder $\R \per \S$, which is a smooth $1$-codimensional submanifold of the Hilbert space $\R\per G$.
A crucial point is then to evaluate the topological degree of the map $\psi$.
Since the domain of $\psi$ is a manifold, we cannot apply the classical Leary--Schauder degree.
Instead, we use a notion of topological degree for oriented Fredholm maps of index zero between real differentiable Banach manifolds, developed by two authors of this paper, and whose construction and properties are summarized in Section 3 for the reader's convenience.
Such a notion of degree has been introduced in \cite{BeFu1} (see also \cite{BeFu2,BeFu5,BeCaFuPe-s6} for additional details).

Taking advantage of the odd multiplicity assumption (H2), of condition (H1) on the compactness of $C$, and of the transversality condition (H3), we are then able to apply a result of \cite{BeCaFuPe-s5} concerning the case of \emph{simple eigenvalues}.
Precisely, call $\l_* \in \R$ a \emph{simple eigenvalue} of \eqref{eigenvalue problem intro} if there exists $x_* \in \S$ such that $\Ker(L-\l_*C)= \R x_*$ and $H = \Im(L-\l_*C) \oplus \R Cx_*$.
In \cite{BeCaFuPe-s5} we proved that
\begin{itemize}
\item
\emph{if $\l_*$ is a simple eigenvalue of \eqref{eigenvalue problem intro} and $x_*$ and $-x_*$ are the two corresponding unit eigenvectors, then the ``twin'' eigenpoints $p_* = (\l_*,x_*)$ and $\bar p_* = (\l_*,-x_*)$ are isolated zeros of $\psi$.
Moreover, under the assumption that the operator $C$ is compact, they give the same contribution to the \mbox{$bf$-degree,} which is either $1$ or $-1$, depending on the orientation of~$\psi$.}
\end{itemize}
Such an assertion generalizes, to the infinite dimensional case, an analogous result in \cite{BeCaFuPe-s4} concerning a ``classical eigenvalue problem'' in $\R^k$.
Let us point out that the result in \cite{BeCaFuPe-s4} is based on the notion of Brouwer degree for maps between finite dimensional oriented manifolds, whereas, as already stressed, 
 the extension to the infinite-dimensional setting 
of \cite{BeCaFuPe-s5} requires a degree for Fredholm maps of index zero acting between Banach manifolds, as the one introduced in \cite{BeFu1}.
To apply this degree we need the unit sphere $\S$ to be a smooth manifold: for this reason, we restrict our study to Hilbert spaces instead of the more general Banach environment.

\medskip
The study of the local \cite{BeCaFuPe-s1, Chi2017, ChFuPe1, ChFuPe2, ChFuPe3, ChFuPe4,ChFuPe5}
as well as global
\cite{BeCaFuPe-s2,BeCaFuPe-s3,BeCaFuPe-s4,BeCaFuPe-s5, BeCaFuPe-s6}
 persistence property when the eigenvalue $\l_*$ is not necessarily simple
has been performed in recent papers by
the authors, also in collaboration with R.\ Chiappinelli.
In particular a first pioneering result in this sense is due to Chiappinelli \cite{Chi}, who proved the existence of the local persistence of eigenvalues and eigenvectors, in Hilbert spaces, in the case of a simple isolated eigenvalue.

Among others, let us quote our paper \cite{BeCaFuPe-s2} in which we tackled a problem very similar to the one we consider here.
The main result of \cite{BeCaFuPe-s2} regards, roughly speaking, the global persistence property of the \emph{eigenpairs} $(s,\l)$ of \eqref{perturbed eigenvalue problem intro}, in the $s\l$-plane, under the \emph{odd multiplicity} assumption.
Thus, the result we obtain here on the global persistence of the solutions $(s,\l,x)$ of \eqref{perturbed eigenvalue problem intro} was, in some sense, implicitly conjectured in \cite{BeCaFuPe-s2}.

The present paper generalizes the ``global persistence'' property of 
 solution triples which, either in finite-dimensional or infinite-dimensional case, has been studied in
 \cite{BeCaFuPe-s3,BeCaFuPe-s4,BeCaFuPe-s5, BeCaFuPe-s6}
in the case of a simple eigenvalue.
Since it is known that the persistence 
property need not hold if $\l_*$ is an eigenvalue of even multiplicity, it is natural to investigate the odd-multiplicity case.
However such an extension is not trivial and is based on 
advanced degree-theoretical tools.

We close the paper with some illustrating examples showing, in particular, that the odd dimensionality of $\Ker (L-\l_*C)$ cannot be removed, the other assumptions remaining valid.

\section{Preliminaries}
\label{Preliminaries}

In this section we recall some notions that will be used in the sequel.
We mainly summarize some concepts which are needed for the construction of the topological degree for oriented Fredholm maps of index zero between real differentiable Banach manifolds introduced in \cite{BeFu1}, here called $bf$-degree to distinguish it from the Leray--Schauder degree, called $LS$-degree (see \cite{BeFu2,BeFu5,BeCaFuPe-s6} for additional details).

\smallskip
It is necessary to begin by focusing on the preliminary concept of orientation for Fredholm maps of index zero between manifolds.
The starting point is an algebraic notion of orientation for Fredholm linear operators of index zero.

Consider two real Banach spaces $E$ and $F$ and denote by $\L(E,F)$ the space of the bounded linear operators from $E$ into $F$ with the usual operator norm.
If $E=F$, we write $\L(E)$ instead of $\L(E,E)$.
By $\mathrm{Iso}(E,F)$ we mean the subset of $\L(E,F)$ of the invertible operators, and we write $\GL(E)$ instead of $\Iso(E,E)$.
The subspace of $\L(E,F)$ of the compact operators will be denoted by $\K(E,F)$, or simply by $\K(E)$ when $F=E$.
Finally, $\F(E,F)$ will stand for the vector subspace of $\L(E,F)$ of the operators having finite dimensional image (recall that, in the infinite dimensional context, $\F(E,F)$ is not closed in $\L(E,F)$).
We shall write $\F(E)$ when $F=E$.

\medskip
Recall that an operator $T \in \L(E,F)$ is said to be \emph{Fredholm} (see e.g.\ \cite{TaLa}) if its kernel, $\Ker T$, and its cokernel, $\coKer T = F/T(E)$, are both finite dimensional.
The \emph{index} of a Fredholm operator $T$ is the integer
\[
\ind T = \dim(\Ker T) - \dim(\coKer T).
\]
In particular, any invertible linear operator is Fredholm of index zero.
Observe also that, if $T \in \L(\R^k,\R^s)$, then $\ind T = k-s$.

The subset of $\L(E,F)$ of the Fredholm operators will be denoted by $\Phi(E,F)$; while $\Phi_n(E,F)$ will stand for the set $\{T \in \Phi(E,F): \ind T = n\}$.
By $\Phi(E)$ and $\Phi_n(E)$ we will designate, respectively, $\Phi(E,E)$ and $\Phi_n(E,E)$.

\medskip
We recall some important properties of Fredholm operators.

\begin{itemize}
\item
[(F1)] \emph{If $T \in \Phi(E,F)$, then $\Im T$ is closed in $F$.} 
\item
[(F2)] \emph{The composition of Fredholm operators is Fredholm and its index is the sum of the indices of all the composite operators.}
\item
[(F3)] \emph{If $T \in \Phi_n(E,F)$ and $K \in \K(E,F)$, then $T+K \in \Phi_n(E,F)$.}
\item
[(F4)] \emph{For any $n \in \Z$, the set $\Phi_n(E,F)$ is open in $\L(E,F)$.}
\end{itemize}

%
%

\medskip
Let $T \in \L(E)$ be given.
If $I-T \in \F(E)$, where $I \in \L(E)$ is the identity, we say that 
$T$ is an \emph{admissible operator (for the determinant)}.
The symbol $\A(E)$ will stand for the affine subspace of $\L(E)$ of the admissible operators.

It is known (see \cite{Ka}) that the determinant of an operator $T \in \A(E)$ is well defined as follows: $\det T := \det T|_{\hat E}$, where $T|_{\hat E}$ is the restriction (as domain and as codomain) to any finite dimensional subspace $\hat E$ of $E$ containing $\Im(I-T)$, with the understanding that $\det T|_{\hat E} = 1$ if $\hat E = \{0\}$.
As one can check, the function $\det\colon \A(E) \to \R$ inherits most of the properties of the classical determinant.
For more details, see e.g.\ \cite{BeFuPeSp07}.

\medskip
Let $T \in \Phi_0(E,F)$ be given.
As in \cite{BeCaFuPe-s6}, we will say that an operator $K \in \F(E,F)$ is a \emph{companion\footnote{In previous papers, e.g.\ in \cite{BeFu1}, it was used the word
\emph{corrector} instead of \emph{companion}} of $T$} if $T+K$ is invertible.

Observe in particular that any $T \in \Iso(E,F)$ has a \emph{natural companion}: that is, the zero operator $0 \in\L(E,F)$.
This fact was crucial in \cite{BeFu1} for the construction of the $bf$-degree.

Given $T \in \Phi_0(E,F)$, we denote by $\mathcal C(T)$ the (nonempty) subset of $\F(E,F)$ of all the companions of $T$.
The following definition establishes a partition of $\mathcal C(T)$ in two equivalence classes and is a key step for the definition of orientation given in \cite{BeFu1}.

\begin{definition}[Equivalence relation]
\label{equivalence relation}
Two companions $K_1$ and $K_2$ of an operator $T \in \Phi_0(E,F)$ are \emph{equivalent} (more precisely, \emph{$T$-equivalent)} if the admissible operator $(T+K_2)^{-1}(T+K_1)$ has positive determinant.
\end{definition}

\begin{definition}[Orientation]
\label{orientation of T}
An \emph{orientation} of $T \in \Phi_0(E,F)$ is one of the two equivalence classes of $\mathcal C(T)$, denoted by $\mathcal C_+(T)$ and called the class of \emph{positive companions} of the \emph{oriented operator} $T$.
The set $\mathcal C_-(T) = \mathcal C(T) \setminus C_+(T)$ of the \emph{negative companions} is the \emph{opposite orientation of $T$}.
\end{definition}

Some further definitions are in order.

\begin{definition}[Natural orientation]
\label{natural orientation}
Any $T \in \Iso(E,F)$ admits the \emph{natural orientation}: the one given by considering the trivial operator of $\L(E,F)$ as a positive companion.
\end{definition}

\begin{definition}[Oriented composition]
\label{oriented composition}
The \emph{oriented composition} of two oriented operators, $T_1 \in \Phi_0(E_1,E_2)$ and $T_2 \in \Phi_0(E_2,E_3)$, is the operator $T_2T_1$ with the orientation given by considering $K = (T_2+K_2)(T_1+K_1)-T_2T_1$ as a positive companion whenever $K_1$ and $K_2$ are positive companions of $T_1$ and $T_2$, respectively.
\end{definition}

Observe that the oriented composition is associative and, consequently, this notion can be extended to the composition of three (or more) oriented operators.

\begin{definition}[Sign of an oriented operator]
\label{sign}
Let $T \in \Phi_0(E,F)$ be an oriented operator.
Its \emph{sign} is the integer
\[
\sign T =
\left\{
\begin{array}{rl}
+1 & \mbox{if } T \mbox{ is invertible and naturally oriented,}\\
-1 & \mbox{if } T \mbox{ is invertible and not naturally oriented,}\\
 0 & \mbox{if } T \mbox{ is not invertible.}
\end{array}
\right.
\]
\end{definition}

A crucial fact in the definition of oriented map and the consequent construction of the $bf$-degree is that
\begin{itemize}
\item
\emph{the orientation of any operator $T_* \in \Phi_0(E,F)$ induces an orientation of the operators in a neighborhood of $T_*$.}
\end{itemize}
In fact, since $\Iso(E,F)$ is open in $\L(E,F)$, for any companion $K$ of $T_*$ we have that $T+K$ is invertible when $T$ is sufficiently close to $T_*$.
Thus, because of property (F3) of the Fredholm operators, any such $T$ belongs to $\Phi_0(E,F)$.
Consequently, $K$ is as well a companion of $T$.

\begin{definition}
\label{orientation of Gamma}
Let $\G\colon X \to \Phi_0(E,F)$ be a continuous map defined on a metric space $X$.
A \emph{pre-orientation of $\G$} is a function that to any $x \in X$ assigns an orientation $\o(x)$ of $\G(x)$.
A pre-orientation (of $\G$) is an \emph{orientation} if it is \emph{continuous}, in the sense that, given any $x_* \in X$, there exist $K \in \o(x_*)$ and a neighborhood $W$ of $x_*$ such that $K \in \o(x)$ for all $x \in W$.
The map $\G$ is said to be \emph{orientable} if it admits an orientation, and \emph{oriented} if an orientation has been chosen.
In particular, a subset $Y$ of $\Phi_0(E,F)$ is \emph{orientable} or \emph{oriented} if so is the inclusion map $Y \hookrightarrow \Phi_0(E,F)$.
\end{definition}

Observe that the set $\hat\Phi_0(E,F)$ of the oriented operators of $\Phi_0(E,F)$ has a natural topology, and the natural projection $\pi\colon\hat\Phi_0(E,F) \to \Phi_0(E,F)$ is a $2$-fold covering space (see \cite{BeFu2} for details).
Therefore, an orientation of a map $\G$ as in Definition \ref{orientation of Gamma} could be regarded as a lifting $\hat \G$ of $\G$.
This implies that, if the domain $X$ of $\G$ is simply connected and locally path connected, then $\G$ is orientable.

\medskip
Let $f\colon U \to F$ be a $C^1$-map defined on an open subset of $E$, and denote by $df_x \in \L(E,F)$ the Fr\'echet differential of $f$ at a point $x \in U$.

We recall that $f$ is said to be \emph{Fredholm of index $n$}, called \emph{$\Phi_n$-map} and hereafter also denoted by $f \in \Phi_n$, if $df_x \in \Phi_n(E,F)$ for all $x \in U$.
Therefore, if $f \in \Phi_0$, Definition \ref{orientation of Gamma} and the continuity of the differential map $df\colon U \to \Phi_0(E,F)$ suggest the following

\begin{definition}[Orientation of a $\Phi_0$-map in Banach spaces]
\label{Orientation of a map in the flat case}
Let $U$ be an open subset of $E$ and $f\colon U \to F$ a Fredholm map of index zero.
A \emph{pre-orientation} or an \emph{orientation} of $f$ are, respectively, a pre-orientation or an orientation of $df$, according to Definition \ref{orientation of Gamma}.
The map $f$ is said to be \emph{orientable} if it admits an orientation, and \emph{oriented} if an orientation has been chosen.
\end{definition}

\begin{remark}
\label{double nature of a Phi-zero operator}
A very special $\Phi_0$-map is given by an operator $T \in \Phi_0(E,F)$.
Thus, for $T$ there are two different notions of orientations: 
the algebraic one and that in which $T$ is seen as a $C^1$-map, according to Definitions \ref{orientation of T} and \ref{Orientation of a map in the flat case}, respectively.
In each case $T$ admits exactly two orientations (in the second one this is due to the connectedness of the domain $E$).
Hereafter, we shall tacitly assume that the two notions agree.
Namely, $T$ has an algebraic orientation $\o$ if and only if its differential $dT_x\colon \dot x \mapsto T\dot x$ has the $\o$ orientation for all $x \in E$.
\end{remark}

Let us summarize how the notion of orientation can be given for maps acting between real Banach manifolds.
In the sequel, by \emph{manifold} we shall mean, for short, a smooth Banach manifold embedded in a real Banach space.

Given a manifold $\M$ and a point $x \in \M$, the tangent space of $\M$ at $x$ will be denoted by $T_x\M$.
If $\M$ is embedded in a Banach space $\widetilde E$, $T_x\M$ will be identified with a closed subspace of $\widetilde E$, for example by regarding any tangent vector of $T_x\M$ as the derivative $\g'(0)$ of a smooth curve $\g\colon (-1,1) \to \M$ such that $\g(0)=x$.

\medskip
Assume that $f \colon \M \to \N$ is a $C^1$-map between two manifolds, respectively embedded in $\widetilde E$ and $\widetilde F$ and modelled on $E$ and $F$.
As in the flat case, $f$ is said to be \emph{Fredholm of index $n$} (written $f \in \Phi_n$) if so is the differential $df_x \colon T_x\M \to T_{f(x)}\N$, for any $x \in \M$ (see \cite{Smale}).

Given $f \in \Phi_0$, suppose that to any $x \in \M$ it is assigned an orientation $\o(x)$ of $df_x$ (also called \emph{orientation of $f$ at $x$}).
As above, the function $\o$ is called a \emph{pre-orientation} of $f$, and an \emph{orientation} if it is continuous, in a sense to be specified (see Definition \ref{Orientation of a map in the non-flat case}).

\begin{definition}
\label{pre-oriented composition}
The pre-oriented composition of two (or more) pre-oriented maps between manifolds is given by assigning, at any point $x$ of the domain of the composite map, the composition of the orientations (according to Definition \ref{oriented composition}) of the differentials in the chain representing the differential at $x$ of the composite map.
\end{definition}

Assume that $f \colon \M \to \N$ is a $C^1$-diffeomorphism.
Thus, for any $x \in \M$, we may take as $\o(x)$ the natural orientation of $df_x$ (recall Definition \ref{natural orientation}).
This pre-orientation of $f$ turns out to be continuous according to Definition \ref{Orientation of a map in the non-flat case} below (it is, in some sense, constant).
From now on, unless otherwise stated,
\begin{itemize}
\item
\textbf{any diffeomorphism will be considered oriented with the natural orientation}.
\end{itemize}
\noindent
In particular, in a composition of pre-oriented maps, all charts and parametrizations of a manifold will be tacitly assumed to be naturally oriented.

\begin{definition}[Orientation of a $\Phi_0$-map between manifolds]
\label{Orientation of a map in the non-flat case}
Let $f \colon \M \to \N$ be a $\Phi_0$-map between two manifolds modelled on $E$ and $F$, respectively.
A pre-orientation of $f$ is an \emph{orientation} if it is \emph{continuous} in the sense that, given any two charts, $\varphi\colon U \to E$ of $\M$ and $\zeta\colon V \to F$ of $\N$, such that $f(U) \subseteq V$, the pre-oriented composition
\[
\zeta \circ f \circ \varphi^{-1} \colon U \to V
\]
is an oriented map according to Definition \ref{Orientation of a map in the flat case}.

The map $f$ is said to be \emph{orientable} if it admits an orientation, and \emph{oriented} if an orientation has been chosen.
\end{definition}

\medskip
For example any local diffeomorphism $f\colon \M \to \N$ admits the \emph{natural orientation}, given by assigning the natural orientation to the operator $df_x$, for any $x \in \M$ (see Definition \ref{natural orientation}).

In contrast, a very simple example of non-orientable $\Phi_0$-map is given by a constant map from the $2$-dimensional projective space into $\R^2$ (see \cite{BeFu2}).

\begin{notation}
\label{slice and partial map}
Let $D$ be a subset of the product $X \per Y$ of two metric spaces.
Given $x \in X$, we call \emph{$x$-slice of $D$} the set
$D_x = \{y \in Y: (x,y) \in D\}$.
Moreover, if $f \colon D \to Z$ is a map into a metric space $Z$, we denote by $f_x \colon D_x \to Z$ the \emph{partial map of $f$} defined by $f_x = f(x,\cdot)$.
\end{notation}

\medskip
Similarly to the case of a single map, one can define a notion of orientation of a continuous family of $\Phi_0$-maps depending on a parameter $s \in [0,1]$.
To be precise, one has the following

\begin{definition}[Oriented $\Phi_0$-homotopy]
\label{Phi-zero-homotopy}
A \emph{$\Phi_0$-homotopy} between two Banach manifolds $\M$ and $\N$ is a $C^1$-map $h \colon [0,1] \per\M \to \N$ such that, for any $s \in [0,1]$, the partial map $h_s= h(s,\cdot)$ is Fredholm of index zero.
An \emph{orientation} of $h$ is a \emph{continuous function} $\o$ that to any $(s,x) \in [0,1]\per\M$ assigns an orientation $\o(s,x)$ to the differential $d(h_s)_x \in \Phi_0(T_x\M, T_{h(s,x)}\N)$, where ``continuous'' means that, given any chart $\varphi\colon U \to E$ of $\M$, a subinterval $J$ of $[0,1]$, and a chart $\zeta\colon V \to F$ of $\N$ such that $h(J\per U) \subseteq V$, the pre-orientation of the map $\G\colon J\per U \to \Phi_0(E,F)$ that to any $(s,x) \in J\per U$ assigns the pre-oriented composition
\[
d(\zeta \circ h_s \circ \varphi^{-1})_x
= d\zeta_{h(s,x)}d(h_s)_x (d\varphi_x)^{-1}
\]
is an orientation, according to Definition \ref{orientation of Gamma}.

The homotopy $h$ is said to be \emph{orientable} if it admits an orientation, and \emph{oriented} if an orientation has been chosen.
\end{definition}

If a $\Phi_0$-homotopy $h$ has an orientation $\o$, then any partial map $h_s = h(s,\cdot)$ has a \emph{compatible} orientation $\o(s,\cdot)$.
Conversely, one has the following

\begin{proposition}[\!\cite{BeFu1,BeFu2}]
\label{orientation transport}
Let $h\colon [0,1]\per\M \to \N$ be a $\Phi_0$-homotopy, and assume that one of its partial maps, say $h_s$, has an orientation.
Then, there exists and is unique an orientation of $h$ which is compatible with that of $h_s$.
In particular, if two maps from $\M$ to $\N$ are $\Phi_0$-homotopic, then they are both orientable or both non-orientable.
\end{proposition}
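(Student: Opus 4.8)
The plan is to use the structural fact, recalled right after Definition~\ref{orientation of Gamma}, that the natural projection $\pi\colon\hat\Phi_0(E,F)\to\Phi_0(E,F)$ is a $2$-fold covering space, so that an orientation is, locally in charts, a continuous section along $\pi$; the statement then follows from standard covering-space theory (a covering trivial over a deformation retract is trivial; a lift over a connected base is determined by one value), together with a patching argument to descend from charts to manifolds. Write $s_0$ for the given parameter value, so that $h_{s_0}$ carries an orientation in the sense of Definition~\ref{Orientation of a map in the non-flat case}. For a connected chart domain $U$ of $\M$, the orientations of the differentials $d(h_s)_x$, for $(s,x)$ ranging over $[0,1]\per U$, assemble in a natural way into a $2$-fold covering $p_U\colon\Theta_U\to[0,1]\per U$, locally the pullback of $\pi$ along a coordinate expression of $h$, and an orientation of $h$ over $[0,1]\per U$ is precisely a continuous section of $p_U$. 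Since the chart domains cover $\M$, it suffices to produce, for each such $U$, a section $\sigma_U$ of $p_U$ whose restriction over the slice $\{s_0\}\per U$ equals the orientation of $h_{s_0}$ read over $U$, and to check that these sections agree on overlaps.

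\emph{Existence and uniqueness of $\sigma_U$.} The slice $\{s_0\}\per U$ is a deformation retract of $[0,1]\per U$ (contract $[0,1]$ onto $s_0$, leaving $U$ fixed). Over $\{s_0\}\per U$ the covering $p_U$ is trivial, since it admits the section given by the orientation of $h_{s_0}$ and the base $\{s_0\}\per U$ is connected; consequently, $[0,1]\per U$ being homotopy equivalent to $\{s_0\}\per U$, the covering $p_U$ is trivial over all of $[0,1]\per U$ as well. On each connected component $[0,1]\per W$ of $[0,1]\per U$ (with $W$ a component of $U$), which meets the slice $\{s_0\}\per U$, a trivial $2$-fold covering has exactly two sections, each determined by its value at a single point; hence the section over $\{s_0\}\per U$ extends, uniquely on each component, to a section $\sigma_U$ of $p_U$ over $[0,1]\per U$.

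\emph{Patching, uniqueness, and the final assertion.} For two connected chart domains $U_1,U_2$, the sections $\sigma_{U_1}$ and $\sigma_{U_2}$ both restrict over $\{s_0\}\per(U_1\cap U_2)$ to the orientation of $h_{s_0}$; by the uniqueness just established, applied on each connected component of $[0,1]\per(U_1\cap U_2)$ (every such component meets the slice $s=s_0$), they coincide on $[0,1]\per(U_1\cap U_2)$. The $\sigma_U$ therefore patch into a globally defined section, that is, a pre-orientation $\o$ of $h$, and $\o$ restricts to the orientation of $h_{s_0}$ by construction. That $\o$ satisfies the continuity clause of Definition~\ref{Phi-zero-homotopy}, for arbitrary charts $\varphi\colon U'\to E$ of $\M$, $\zeta\colon V\to F$ of $\N$ and subinterval $J\subseteq[0,1]$ with $h(J\per U')\subseteq V$, is routine, since continuity is a local property and chart transitions act on differentials by composition with isomorphisms, under which the associated $2$-fold coverings and their sections correspond. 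Any orientation of $h$ compatible with that of $h_{s_0}$ must agree with $\sigma_U$ on each $[0,1]\per U$ by the uniqueness of $\sigma_U$, hence equals $\o$; this gives uniqueness. Finally, if $f_0,f_1\colon\M\to\N$ are $\Phi_0$-homotopic through some $h$ with $h_0=f_0$ and $h_1=f_1$, and $f_0$ is orientable, then $h$ is orientable by the above and the restriction of its orientation to the slice $s=1$ orients $f_1$; by symmetry $f_0$ and $f_1$ are simultaneously orientable or non-orientable.

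\emph{The main obstacle.} The substantive part will be the last two steps: setting up the auxiliary $2$-fold covering $p_U$ over $[0,1]\per U$ and verifying that it is locally the pullback of $\pi$ along $h$, so that covering-space arguments and uniqueness of lifts genuinely apply, and then checking that the patched section $\o$ meets the continuity requirement of Definition~\ref{Phi-zero-homotopy} for \emph{every} admissible choice of charts, not only for those used in the construction. Neither point is deep, but both hinge on handling local triviality carefully and on keeping track of the connectedness of chart domains, of their overlaps, and of the slices $[0,1]\per W$, so that the uniqueness-of-lifts principle can be invoked at each patching step.
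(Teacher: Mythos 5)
The paper does not prove this proposition at all: it is quoted verbatim from \cite{BeFu1,BeFu2}, so there is no internal argument to compare yours against. On its own terms, your proof is correct and follows the natural route, which is also the one suggested by the paper's remark after Definition~\ref{orientation of Gamma} that an orientation is a lifting to the $2$-fold covering $\pi\colon\hat\Phi_0(E,F)\to\Phi_0(E,F)$: you pull this covering back over $[0,1]\per U$ for chart domains $U$, use the homotopy lifting property (equivalently, triviality of a covering over $[0,1]\per U$ once it is trivial over the slice $\{s_0\}\per U$, which holds here since $U$ is metrizable and locally path connected, or more simply because a $2$-fold covering with a section is trivial) to extend the given section, invoke uniqueness of lifts on the connected sets $[0,1]\per W$ to get both uniqueness and agreement on overlaps, and then restrict to $s=1$ for the ``in particular'' statement. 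The one place where your write-up is thinner than it should be is the point you yourself flag: the well-definedness of the auxiliary covering $\Theta_U$ and the verification that the patched pre-orientation is continuous in the sense of Definition~\ref{Phi-zero-homotopy} for \emph{every} admissible choice of charts both rest on the lemma that continuity of orientations is preserved under oriented composition with continuous families of naturally oriented isomorphisms (the chart transitions $d\zeta$ and $(d\varphi)^{-1}$). This is true, but it is precisely the technical substance of the orientation theory developed in \cite{BeFu1,BeFu2} (it is also what makes Definition~\ref{Orientation of a map in the non-flat case} chart-independent), so it should be either cited explicitly or proved, rather than labelled routine; with that lemma in hand, the rest of your argument is complete.
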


As a consequence of Proposition \ref{orientation transport}, one gets that any $C^1$-map $f\colon \M \to \M$ which is $\Phi_0$-homotopic to the identity is orientable, since so is the identity (even when $\M$ is finite dimensional and not orientable).

\medskip
The $bf$-degree, introduced in \cite{BeFu1}, satisfies the three fundamental properties listed below:
\emph{Normalization, Additivity and Homotopy Invariance}.
In \cite{BeFu5}, by means of an axiomatic approach, it is proved that the $bf$-degree is the only possible integer-valued function that satisfies these three properties.

More in detail, the $bf$-degree is defined in a class of \emph{admissible triples}.
Given an oriented $\Phi_0$-map $f\colon \M \to \N$, an open (possibly empty) subset $U$ of $\M$, and a target value $y \in \N$, the triple $(f,U,y)$ is said to be \emph{admissible} for the $bf$-degree provided that $U \cap f^{-1}(y)$ is compact.
From the axiomatic point of view, the $bf$-degree is an integer-valued function, $\deg_{bf}$, defined on the class of all the admissible triples, that satisfies the following three \emph{fundamental properties}.

\medskip
\begin{itemize}
\item
(Normalization) \emph{If $f\colon \M \to \N$ is a naturally oriented diffeomorphism onto an open subset of $\N$, then
\[
\deg_{bf}(f,\M,y) = 1,\quad \forall y \in f(\M).
\]}
\item
(Additivity) \emph{Let $(f,U,y)$ be an admissible triple.
If $U_1$ and $U_2$ are two disjoint open subsets of $U$ such that $U \cap f^{-1}(y) \subseteq U_1 \cup U_2$, then}
\[
\deg_{bf}(f,U,y) = \deg_{bf}(f|_{U_1},U_1,y) + \deg_{bf}(f|_{U_2},U_2,y).
\]
\item
(Homotopy Invariance) \emph{Let $h\colon [0,1]\per\M \to \N$ be an oriented $\Phi_0$-homotopy, and $\g\colon [0,1] \to \N$ a continuous path.
If the set
\[
\big\{(s,x) \in [0,1]\per\M: h(s,x) = \g(s)\big\}
\]
is compact, then
$
\deg_{bf}(h(s,\cdot),\M,\g(s))
$
does not depend on $s \in [0,1]$.}
\end{itemize}

\medskip
Other useful properties are deduced from the fundamental ones (see \cite{BeFu5} for details).
Here we mention some of them.
\medskip
\begin{itemize}
\item
(Localization) \emph{If $(f,U,y)$ is an admissible triple, then
\[
\deg(f,U,y) = \deg(f|_U,U,y).
\]}
\end{itemize}

\medskip
\begin{itemize}
\item
(Existence) \emph{If $(f,U,y)$ is admissible and $\deg_{bf}(f,U,y) \not= 0$, then the equation $f(x) = y$ admits at least one solution in $U$.}
\end{itemize}

\medskip
\begin{itemize}
\item
(Excision) \emph{If $(f,U,y)$ is admissible and $V$ is an open subset of $U$ such that $f^{-1}(y)\cap U \subseteq V$, then
\[
\deg(f,U,y) = \deg(f,V,y).
\]}
\end{itemize}

\medskip
In some sense, given an admissible triple $(f,U,y)$, the integer $\deg_{bf}(f,U,y)$ is an algebraic count of the solutions in $U$ of the equation $f(x) = y$.
In fact, from the fundamental properties one gets the following

\medskip
\begin{itemize}
\item
(Computation Formula)
\emph{If $(f,U,y)$ is admissible and $y$ is a regular value for $f$ in $U$, then the set $U \cap f^{-1}(y)$ is finite and
\[
\deg_{bf}(f,U,y) = \sum_{x \in U \cap f^{-1}(y)} \sign(df_x).
\]}
\end{itemize}

Another useful property that can be deduced from the fundamental ones is the

\medskip
\begin{itemize}
\item
(Topological Invariance)
\emph{If $(f,U,y)$ is admissible and $g\colon \N \to \mathcal O$ is a naturally oriented diffeomorphism onto a manifold $\mathcal O$, then
\[
\deg_{bf}(f,U,y) = \deg_{bf}(g\circ f,U,g(y)).
\]}
\end{itemize}

Some further notation and definitions are in order.

\begin{notation}
Hereafter we will use the shorthand notation $\deg_{bf}(f,U)$ instead of $\deg_{bf}(f,U,0)$, where $f\colon \mathcal M \to F$ is an oriented $\Phi_0$-map from a manifold into a Banach space,
$U$ is an open subset of $\mathcal M$, and $0$ is the null vector of $F$.
Analogously, $\deg_{LS}(f,U)$ means the Leray--Schauder degree $\deg_{LS}(f,U,0)$, where $U$ is an open bounded subset of a Banach space $E$, $f\colon \overline U \to E$ is a compact vector field defined on the closure of $U$, and $0$ is the null vector of $E$.
\end{notation}

\begin{definition}
\label{isolated set}
Let $X$ be a metric space and $\K \subseteq \A \subseteq X$.
We shall say that \emph{$\K$ is an isolated subset of $\A$} if it is compact and relatively open in $\A$.
Thus, there exists an open subset $U$ of $X$ such that $U \cap \A = \K$.
The set $U$ is called an \emph{isolating neighborhood of $\K$ among (the elements of) $\A$}.
\end{definition}

\begin{definition}
\label{definition contribution degree}
Let $f\colon \mathcal M \to F$ be an oriented $\Phi_0$-map from a manifold into a Banach space.
If $\K$ is an isolated subset of $f^{-1}(0)$, we shall call \emph{contribution of $\K$ to the $bf$-degree of $f$} the integer $\deg_{bf}(f,U)$, where $U \subseteq \mathcal M$ is any isolating neighborhood of $\K$ among $f^{-1}(0)$.
The excision property of the degree implies that $\deg_{bf}(f,U)$ does not depend on the isolating neighborhood $U$.
\end{definition}

Regarding Definition \ref{definition contribution degree}, we observe that the finite union of isolated subsets of $f^{-1}(0)$ is still an isolated subset.
Moreover, from the excision and the additivity properties of the \mbox{$bf$-degree} one gets that the contribution to the $bf$-degree of this union is the sum of the single contributions of these subsets.

\section{The eigenvalue problem and the associated topological degree}
\label{Results 1}

Let, hereafter, $G$ and $H$ denote two real Hilbert spaces and consider the eigenvalue problem
\begin{equation}
\label{eigenvalue problem}
\left\{
\begin{aligned}
&Lx = \l Cx\\
&x \in \S,
\end{aligned}\right.
\end{equation}
where $\l$ is a real parameter, $L,C \colon G \to H$ are bounded linear operators, and $\S$ denotes the unit sphere of $G$.
To prevent the problem from being meaningless,
\begin{itemize}
\item
\textbf{we will always assume that the operator $L - \l C \in \L(G,H)$ is invertible for some $\l \in \R$.}
\end{itemize}

When $\l \in \R$ is such that $\Ker(L - \l C)$ is nontrivial, then $\l$ is called an \emph{eigenvalue} of the equation $L = \l C$ or, equivalently, of problem~\eqref{eigenvalue problem}.

A solution $(\l,x)$ of \eqref{eigenvalue problem} will also be called an \emph{eigenpoint}.
In this case $\l$ and $x$ are, respectively, an eigenvalue and a \emph{unit eigenvector} of the equation $Lx = \l Cx$.

Notice that the eigenpoints are the solutions of the equation
\[
\psi(\l,x) = 0,
\]
where $\psi$ is the $H$-valued function $(\l,x) \mapsto Lx-\l Cx$ defined on the cylinder $\R \per \S$, which is a smooth $1$-codimensional submanifold of the Hilbert space $\R\per G$.

By $\mathcal S$ we will denote the set of the eigenpoints of \eqref{eigenvalue problem}.
Therefore, given any $\l \in \R$, the \emph{$\l$-slice} $\mathcal S_\l = \{x \in \S: (\l,x) \in \mathcal S\}$ of $\mathcal S$ coincides with $\S \cap \Ker(L-\l C)$.

Thus, $\mathcal S_\l$ is nonempty if and only if $\l$ is an eigenvalue of problem \eqref{eigenvalue problem}.
In this case $\mathcal S_\l$ will be called the \emph{eigensphere of \eqref{eigenvalue problem} corresponding to $\l$} or, simply, the \emph{\mbox{$\l$-}eigensphere}.
Observe that $\mathcal S_\l$ is a sphere whose dimension equals that of $\Ker(L-\l C)$ minus one.
The nonempty subset $\{\l\} \per \mathcal S_\l$ of the cylinder $\R\per\S$ will be called an \emph{eigenset of \eqref{eigenvalue problem}}.

\begin{remark}
\label{injective}
The assumption that $L - \l C$ is invertible for some $\l \in \R$ implies that, for any $\l \in \R$, the restriction of $C$ to the (possibly trivial) kernel of $L-\l C$ is injective.
\end{remark}

Remark \ref{injective} can be proved arguing by contradiction.
In fact, assume that the assertion is false.
Then, there are $\l_* \in \R$ and a nonzero vector
\[
x_* \in \Ker(L -\l_* C) \cap \Ker C.
\]
This implies that, for any $\l$, the operator $L - \l C$ is non-injective and, consequently, non-invertible, in contrast to the assumption.
In fact, for any $\l$, one has
\[
(L - \l C)x_* = (L - \l_* C)x_* - (\l-\l_*)Cx_* = 0.
\]

\begin{remark}
\label{C compact}
If the operator $C$ is compact, then, from the assumption that $L - \l C$ is invertible for some $\hat \l \in \R$, it follows that $L - \l C$ is Fredholm of index zero for any $\l \in \R$ and, consequently, the set of the eigenvalues of problem \eqref{eigenvalue problem} is discrete.
Moreover, $\Ker(L-\l C)$ is always finite dimensional, and so is the intersection
\[
\Im(L-\l C)\cap C(\Ker(L-\l C)).
\]
Consequently, if this intersection is the singleton $\{0\}$, taking into account Remark \ref{injective} and the fact that $L-\l C \in \Phi_0(G,H)$, one has
\[
H = \Im(L-\l C)\oplus C(\Ker(L-\l C)).
\]
\end{remark}
To prove Remark \ref{C compact} notice that, if $L-\hat\l C$ is invertible, then it is trivially Fredholm of index zero.
Now, given any $\l \in \R$, one has
\[
(L - \l C)= (L - \hat\l C) - (\l-\hat\l)C.
\]
Thus, because of the compactness of $C$, from property (F3) of Fredholm operators, one gets that $L - \l C$ is also Fredholm of index zero.
Finally, the set of the eigenvalues of problem \eqref{eigenvalue problem} is discrete since so is, according to the spectral theory of   linear operators, the set of the characteristic values of $(L - \hat\l C)^{-1}C$.

\medskip
Because of Remark \ref{C compact},
\begin{itemize}
\item
\textbf{from now until the end of this section we assume that the operator $C$ is compact}.
\end{itemize}

\medskip
Observe that the function $\psi$ defined above is the restriction to $\R\per\S$ of the nonlinear smooth map
\[
\overline \psi\colon \R\per G \to H, \quad (\l,x) \mapsto Lx-\l Cx.
\]
According to Remark \ref{C compact}, any partial map $\overline\psi_\l\colon G \to H$ of $\overline \psi$ is Fredholm of index zero.
Since the map $\overline\s\colon \R\per G \to G$ given by $\overline\s(\l,x) = x$ is clearly $\Phi_1$, the same holds true, because of the property (F2) of Fredholm operators, for the composition $\overline\psi = \overline\psi_\l \circ \overline\s$.
Consequently, again because of property (F2), one has that the restriction $\psi$ of $\overline\psi$ to the $1$-codimensional submanifold $\R\per\S$ of $\R\per G$ is $\Phi_0$.

\medskip
Notice that, if $\dim G = 1$, the cylinder $\R\per\S$ is disconnected: it is the union of two horizontal lines, $\R\per\{-1\}$ and $\R\per\{1\}$.
Because of this, to make some statements simpler,
\begin{itemize}
\item
\textbf{from now on, unless otherwise stated, we assume that the dimension of the space $G$ is greater than $1$}.
\end{itemize}
In this case the cylinder $\R\per\S$ is connected, and simply connected if 
$\dim G > 2$.
It is actually contractible if $G$ is infinite dimensional.
Therefore, the $\Phi_0$-map $\psi$, defined above, is orientable and admits exactly two orientations.
We choose one of them~and
\begin{itemize}
\item
\textbf{hereafter we assume that $\psi$ is oriented}.
\end{itemize}

\begin{remark}
\label{compact vector field}
Let $\hat \l \in \R$ be such that $L - \hat\l C$ is invertible and let $Z\colon H \to G$ denote its inverse.
Then, given any $\l \in \R$, the two equations
\begin{itemize}
\item
$\overline\psi_\l(x) = (L - \l C)x = 0 \in H$,
\item
$\overline\eta_\l(x) = Z\overline\psi_\l(x) = (I - (\l - \hat\l)ZC)x = 0 \in G$
\end{itemize}
are equivalent ($I$ being the identity on $G$).
Therefore, if $B$ denotes the unit ball of $G$, the Leray--Schauder degree with target $0 \in G$, $\deg_{LS}(\overline\eta_\l,B)$, of the compact vector field $\overline\eta_\l$ is well defined whenever $\l$ is not an eigenvalue of the equation $Lx = \l Cx$.
\end{remark}

Observe that, as a consequence of the homotopy invariance property of the Leray--Schauder degree, the function $\l \mapsto \deg_{LS}(\overline\eta_\l,B)$ is constant on any interval in which it is defined.
Moreover, in these intervals, $\deg_{LS}(\overline\eta_\l,B)$ is either $1$ or $-1$, since the equation $\overline\eta_\l(x) = 0$ has only one solution: the regular point $0 \in G$.

\begin{remark}
\label{same degree}
Let $U$ be an isolating neighborhood of a compact subset of the set $\mathcal S$ of the eigenpoints of \eqref{eigenvalue problem}, and let $Z\colon H \to G$ be as in Remark \ref{compact vector field}.
Then $\deg_{bf}(\psi,U) = \deg_{bf}(\eta,U)$, provided that the map $\eta = Z\psi$ is the oriented composition obtained by considering $Z$ as a naturally oriented diffeomorphism.
\end{remark}

Concerning possible relations between the $LS$-degree of $\overline\eta_\l$ and the $bf$-degree of $\psi$ (or, equivalently, of $\eta = Z\psi$), we believe that the following is true (but up to now we were unable to prove or disprove).

\begin{conjecture}
\label{conjecture}
Let $[\a,\b]$ be a compact (nontrivial) real interval such that the extremes are not eigenvalues of $Lx = \l Cx$.
Then the $bf$-degree of $\psi$ (or, equivalently, of $\eta = Z\psi$) on the open subset $U = (\a,\b)\per\S$ of\, $\R\per\S$ is different from zero if and only if $\deg_{LS}(\overline\eta_\a,B) \not= \deg_{LS}(\overline\eta_\b, B)$.
\end{conjecture}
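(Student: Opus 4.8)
The plan is to reduce the conjecture, via the Additivity of the $bf$-degree and a telescoping of the Leray--Schauder degrees, to a local statement around a single eigenvalue, and then to attack that statement by a finite-rank perturbation which splits a general eigenvalue into simple ones, for which the result recalled in the Introduction from \cite{BeCaFuPe-s5} applies.

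Since $C$ is compact, by Remark \ref{C compact} the eigenvalues of $Lx=\l Cx$ in $[\a,\b]$ form a finite set $\l_1<\dots<\l_m$. Choosing $\a=t_0<t_1<\dots<t_m=\b$ with none of the $t_j$ an eigenvalue and exactly one $\l_i$ in each $(t_{i-1},t_i)$, Additivity gives
\[
\deg_{bf}\big(\psi,(\a,\b)\per\S\big)=\sum_{i=1}^m\deg_{bf}\big(\psi,(t_{i-1},t_i)\per\S\big),
\]
while, $\l\mapsto\deg_{LS}(\overline\eta_\l,B)$ being constant on each interval free of eigenvalues,
\[
\deg_{LS}(\overline\eta_\b,B)-\deg_{LS}(\overline\eta_\a,B)=\sum_{i=1}^m\big(\deg_{LS}(\overline\eta_{t_i},B)-\deg_{LS}(\overline\eta_{t_{i-1}},B)\big),
\]
and the $i$-th summand on the right is $\pm2$ or $0$ according to the parity of the algebraic multiplicity $n_i$ of $\l_i$. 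Thus it suffices to prove the local assertion: for an interval $(t_{i-1},t_i)$ isolating a single eigenvalue $\l_0$ of algebraic multiplicity $n_0$, one has $\deg_{bf}(\psi,(t_{i-1},t_i)\per\S)\neq0$ if and only if $n_0$ is odd and, what is really needed to reassemble the pieces, the sign of this integer is the one prescribed by the corresponding jump of $\deg_{LS}$.

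For the local step I would invoke the Homotopy Invariance of the $bf$-degree (using Proposition \ref{orientation transport} to transport the orientation of $\psi$) along a deformation $L\rightsquigarrow L+K$, with $K$ of finite rank and small enough that $L+K-\hat\l C$ stays invertible and all the perturbed eigenvalues remain in $(t_{i-1},t_i)$, so that the relevant zero set of $\psi$ stays compact during the homotopy. A generic such $K$ replaces $\l_0$ by finitely many \emph{simple} real eigenvalues $\mu_1<\dots<\mu_k$ together with some complex-conjugate pairs, which carry no real eigenpoints; since the complex eigenvalues occur in pairs, $k\equiv n_0\pmod 2$, consistently with the fact that $\deg_{LS}(\overline\eta_\l,B)$ flips sign once at each $\mu_j$. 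By the result of \cite{BeCaFuPe-s5} recalled above, the two twin eigenpoints of each $\mu_j$ are isolated zeros of $\psi$ contributing $\pm2$ to the $bf$-degree, so by Additivity again $\deg_{bf}(\psi,(t_{i-1},t_i)\per\S)$ is an algebraic sum of $k$ terms each equal to $\pm2$.

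The hard part is the control of the signs of these $k$ terms. To conclude one must show that the contributions of two \emph{consecutive} simple eigenvalues along $\R$ are always opposite --- equivalently, that the local $bf$-contribution of a simple eigenvalue $\mu$ equals, up to one globally fixed, orientation-dependent sign, the local jump $\deg_{LS}(\overline\eta_{\mu+},B)-\deg_{LS}(\overline\eta_{\mu-},B)$. Granting this alternation, the $k$ terms sum to $0$ when $k$ is even and to $\pm2$ when $k$ is odd, matching the $LS$-side and giving the conjecture (in fact in the sharper form of an equality up to sign). Establishing this alternation rigorously seems to demand a Lyapunov--Schmidt reduction of $\psi$ near the eigenset of $\mu$ to a map between two copies of a low-dimensional cylinder, an explicit evaluation of its $bf$-degree, and a careful comparison of the $bf$-orientation chosen for $\psi$ with the Leray--Schauder orientation underlying the vector fields $\overline\eta_\l$; it is precisely this orientation bookkeeping, rather than any isolated conceptual gap, that we have not been able to carry through, and that is why the statement is left as a conjecture.
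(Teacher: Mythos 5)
Be aware first of what you are up against: the paper contains no proof of this statement. It is stated as Conjecture \ref{conjecture}, which the authors explicitly say they were unable to prove or disprove; the only supporting evidence given is that either condition implies the existence of an eigenpoint in $U$. Your proposal is honest about not being a proof either, and the reductions you do carry out are sound and closely parallel the technique the paper uses for Theorem \ref{degree theorem}: additivity over subintervals each isolating one eigenvalue (legitimate, since by Remark \ref{C compact} the eigenvalues in $[\a,\b]$ are finitely many), telescoping of the Leray--Schauder degrees with jump $0$ or $\pm 2$ according to the parity of the algebraic multiplicity, a small finite-rank perturbation splitting an eigenvalue into simple ones whose number $k$ has the parity of the algebraic multiplicity, Theorem \ref{bf-degree simple eigenpoint} giving contribution $\pm 2$ from each, and a boundary-slice check for the admissibility of the homotopy. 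With this machinery the ``if'' half of the equivalence is essentially within reach by the same parity trick as in Theorem \ref{degree theorem}: an odd number $k$ of terms $\pm 2$ sums to something $\equiv 2 \pmod 4$, hence nonzero (though, unlike in Theorem \ref{degree theorem}, you no longer have hypothesis (H3) to make the splitting explicit, so the ``generic'' finite-rank perturbation and the identification of the perturbed eigenpoints would still need a real argument).

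The genuine gap is the one you name, and it is exactly the content of the conjecture rather than a technicality: the ``only if'' direction. When the total algebraic multiplicity in $(\a,\b)$ is even you must show that the $k$ contributions of $\pm 2$ cancel, i.e.\ that consecutive simple eigenvalues contribute with opposite signs, equivalently that the local $bf$-contribution of a simple eigenvalue agrees, up to one globally fixed orientation-dependent sign, with the local jump of $\deg_{LS}(\overline\eta_\l,B)$. Theorem \ref{bf-degree simple eigenpoint} does not give this: it fixes only the relative sign of the two twin eigenpoints of a single eigenvalue, not the relative signs attached to distinct eigenvalues; indeed these relative signs must vary, since in Example \ref{k=1} a bounded component of $\Sigma$ joins two simple eigenvalues and Corollary \ref{compact component} then forces their contributions ($\pm 2$ each) to cancel. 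So the parity argument alone cannot yield vanishing in the even case, and the orientation bookkeeping comparing the $bf$-orientation of $\psi$ with the Leray--Schauder orientation of the fields $\overline\eta_\l$ is precisely the step the authors themselves could not carry out. Your sketch organizes the problem correctly but does not close it, and there is no argument in the paper to compare it with.
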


In support of the above conjecture we observe that both the conditions \[
\deg_{bf}(\psi,U) \not= 0 \quad \text{and} \quad \deg_{LS}(\overline\eta_\a, B) \not= \deg_{LS}(\overline\eta_\b, B)
\]
imply the existence of at least one eigenpoint $p_* = (\l_*,x_*) \in U$.
The first one because of the existence property of the $bf$-degree and the last one due to the homotopy invariance property of the $LS$-degree.

\begin{definition}
\label{simple}
An eigenpoint $(\l_*,x_*)$ of \eqref{eigenvalue problem} is said to be \emph{simple} provided that the operator $T = L-\l_* C$ is Fredholm of index zero and satisfies the conditions:
\begin{itemize}
\item[(1)]
$\Ker T = \R x_*$,
\item[(2)]
$Cx_*\notin \Im T$.
\end{itemize}
\end{definition}

We point out that, if an eigenpoint $p_*=(\l_*,x_*)$ is simple, then the corresponding eigenset $\{\l_*\}\per\mathcal S_{\l_*}$ is disconnected.
In fact, it has only two elements: $p_*$ and its \emph{twin eigenpoint} $\bar p_*=(\l_*,-x_*)$, which is as well simple.

\medskip
The following theorem obtained in \cite{BeCaFuPe-s6} was essential in the proofs of some results in \cite{BeCaFuPe-s6} concerning perturbations of \eqref{eigenvalue problem}, as problem \eqref{perturbed eigenvalue problem} in the next section.

\begin{theorem}
\label{bf-degree simple eigenpoint}
In addition to the compactness of $C$, assume that $p_* = (\l_*,x_*)$ and $\bar p_*=(\l_*,-x_*)$ are two simple twin eigenpoints of \eqref{eigenvalue problem}.
Then, the contributions of $p$ and $\bar p$ to the $bf$-degree of $\psi$ are equal: they are both either $1$ or $-1$ depending on the orientation of $\psi$.
Consequently, if $U$ is an isolating neighborhood of the eigenset $\{\l_*\} \per \mathcal S_{\l_*}$, one has $\deg_{bf}(\psi,U) = \pm 2$.
\end{theorem}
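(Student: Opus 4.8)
Throughout, write $T=L-\l_*C$ and let $\psi^\circ$ denote $\psi$ equipped with its chosen orientation $\o$.

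\emph{Step 1: $p_*=(\l_*,x_*)$ and $\bar p_*=(\l_*,-x_*)$ are regular zeros of $\psi$.} The tangent space of $\R\per\S$ at $(\l_*,x_*)$ is $\R\per x_*^\perp$, and
\[
d\psi_{(\l_*,x_*)}(\dot\l,\dot x)=T\dot x-\dot\l\,Cx_*,\qquad(\dot\l,\dot x)\in\R\per x_*^\perp .
\]
By (1), $\Ker T=\R x_*$; by (2), since $T\in\Phi_0(G,H)$, one gets $H=\Im T\oplus\R Cx_*$. Hence the operator above is an isomorphism of $\R\per x_*^\perp$ onto $H$: in $T\dot x-\dot\l Cx_*=0$ the two summands lie in the complementary subspaces $\Im T$ and $\R Cx_*$, so $\dot\l=0$ and $\dot x\in\R x_*\cap x_*^\perp=\{0\}$; and $T$ maps $x_*^\perp$ onto $\Im T$. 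The same holds at $-x_*$. Therefore $0$ is a regular value of $\psi$ near the eigenset, and by the Computation Formula the contributions of $p_*$ and $\bar p_*$ to $\deg_{bf}(\psi)$ are $\sign(d\psi^\circ_{p_*})$ and $\sign(d\psi^\circ_{\bar p_*})$, each equal to $\pm1$.

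\emph{Step 2: the two signs coincide, via the antipodal symmetry.} Let $a\colon\R\per\S\to\R\per\S$, $a(\l,x)=(\l,-x)$, an involutive diffeomorphism interchanging $p_*$ and $\bar p_*$, and let $\nu\colon H\to H$, $\nu(y)=-y$, a naturally oriented isomorphism. Then $\psi\circ a=\nu\circ\psi$, and differentiating at $z\in\R\per\S$ gives the operator identity $d\psi_{a(z)}\circ da_z=\nu\circ d\psi_z=:A(z)\in\Phi_0$. This operator carries two orientations: the one built from $\o(z)$ and the natural orientation of $\nu$ (the orientation of $(\nu\circ\psi)^\circ$ at $z$), and the one built from $\o(a(z))$ and the natural orientation of $da_z$ (the orientation of $(\psi\circ a)^\circ$ at $z$). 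They differ by a sign $\delta(z)\in\{\pm1\}$; since $\o$ is continuous, $\delta$ is locally constant, hence constant on the connected cylinder $\R\per\S$, and it is unchanged if $\o$ is reversed. Evaluating at $p_*$, where $A(p_*)$ is invertible, and using that $\sign$ is multiplicative under oriented composition together with $\sign(\nu)=\sign(da_{p_*})=1$, one obtains $\sign(d\psi^\circ_{p_*})=\delta\cdot\sign(d\psi^\circ_{\bar p_*})$. Thus the theorem reduces to the identity $\delta=+1$.

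\emph{Step 3: $\delta=+1$, using the compactness of $C$.} Choose $\hat\l\in\R$ with $L-\hat\l C$ invertible, put $Z=(L-\hat\l C)^{-1}$ and $K=ZC\in\K(G)$. By Remark~\ref{same degree} we may replace $\psi$ by the oriented composition $\eta=Z\psi$, i.e.\ by the map $\eta(\l,x)=x-(\l-\hat\l)Kx$ on $\R\per\S$, which satisfies the same symmetry $\eta\circ a=-\eta$ and carries the same constant $\delta$. Here compactness of $C$ is used: $K$ is compact and the simplicity conditions (1)--(2) translate into $\rho:=1/(\l_*-\hat\l)$ being an \emph{algebraically simple} eigenvalue of $K$ with eigenvector $x_*$; consequently there is a $K$-invariant splitting $G=\R x_*\oplus G_1$ into closed subspaces with $G_1$ of finite codimension and on which the relevant part of $\eta$ is invertible near the eigenset. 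A Lyapunov--Schmidt reduction along this splitting, carrying the orientation data along, expresses the contributions of $p_*$ and $\bar p_*$ to $\deg_{bf}(\eta)$ as the corresponding contributions in a finite-dimensional ``classical'' eigenvalue problem in $\R^k$; by the finite-dimensional result of \cite{BeCaFuPe-s4} these are equal, which forces $\delta=+1$.

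\emph{Conclusion and main obstacle.} By Steps 1--3 the contributions of $p_*$ and $\bar p_*$ to $\deg_{bf}(\psi)$ coincide, their common value being $+1$ or $-1$ according to the orientation of $\psi$. Since $\Ker T=\R x_*$, the eigenset $\{\l_*\}\per\mathcal S_{\l_*}$ is exactly $\{p_*,\bar p_*\}$; hence, for any isolating neighborhood $U$ of it, the excision and additivity properties of the $bf$-degree give $\deg_{bf}(\psi,U)=\sign(d\psi^\circ_{p_*})+\sign(d\psi^\circ_{\bar p_*})=\pm2$. The hard part is Step 3, namely the identity $\delta=+1$: one must transport the orientation of $\psi$ through the Lyapunov--Schmidt reduction into finite dimensions and match it with the orientation bookkeeping of \cite{BeCaFuPe-s4}. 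It is precisely there that the compactness of $C$ is indispensable, in that it provides the finite-codimensional $K$-invariant reduction; without it the twin eigenpoints need not give equal contributions.
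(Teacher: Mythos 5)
First, a point of comparison: the paper does not prove Theorem \ref{bf-degree simple eigenpoint} at all — it is imported from \cite{BeCaFuPe-s6} — so there is no in-paper argument to measure your proof against; what follows assesses the proposal on its own terms.

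Your Steps 1 and 2 are correct: $d\psi_{p_*}$ and $d\psi_{\bar p_*}$ are isomorphisms of $\R\per x_*^\perp$ onto $H=\Im T\oplus\R Cx_*$, so each twin contributes $\pm 1$, and the symmetry bookkeeping (defining $\delta$ by comparing the two orientations of $\nu\circ d\psi_z=d\psi_{a(z)}\circ da_z$, showing $\delta$ is constant on the connected cylinder and that $\sign(d\psi^\circ_{p_*})=\delta\,\sign(d\psi^\circ_{\bar p_*})$) is a clean reformulation. But it is only a reformulation: the theorem is exactly the claim $\delta=+1$, and Step 3, which is supposed to establish it, is not a proof. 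You assert a Lyapunov--Schmidt reduction ``carrying the orientation data along'' and then invoke the finite-dimensional result of \cite{BeCaFuPe-s4}; yet the whole difficulty of the infinite-dimensional statement is precisely how the $bf$-orientation of $\psi$ (equivalently of $\eta=Z\psi$, via Remark \ref{same degree}) transforms under such a reduction and matches the Brouwer-orientation conventions of \cite{BeCaFuPe-s4}. That transport of orientation is never carried out, and it is where the compactness of $C$ must enter in a quantitative way: in your sketch compactness only supplies the $K$-invariant splitting $G=\R x_*\oplus G_1$, which by itself does not determine the sign $\delta$. You concede the point yourself (``the hard part is Step 3''); as written, the argument reduces the theorem to an equivalent unproved statement rather than proving it, so there is a genuine gap at the decisive step.
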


\smallskip
We close this section strictly devoted to the unperturbed eigenvalue problem \eqref{eigenvalue problem} with a consequence of 
Theorem \ref{bf-degree simple eigenpoint}, which will be crucial in the proof of our main result (Theorem \ref{main result} in Section \ref{Results 2}).

\begin{theorem}
\label{degree theorem}
Let $\l_* \in \R$, put $T = L-\l_*C$, and suppose that
\begin{itemize}
\item[(H1)]
$C$ is a compact operator,
\item[(H2)]
$\Ker T$ is odd dimensional,
\item[(H3)]
$\Im T \cap C(\Ker T) = \{0\}$.
\end{itemize}
Then, given (in $\R\per \S$) an isolating neighborhood $U$ of the eigenset $\{\l_*\}\per \mathcal S_{\l_*}$, one has $\deg_{bf}(\psi,U) \not= 0$.
\end{theorem}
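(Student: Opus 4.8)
The plan is to deform $\psi$, through a finite-rank perturbation of $L$ supported on $\Ker T$, into a map for which the eigenvalue $\l_*$ has split into a single \emph{simple} eigenvalue nearby, and then to invoke Theorem \ref{bf-degree simple eigenpoint}; the odd multiplicity hypothesis (H2) will be used precisely to guarantee that a (necessarily real) eigenvalue survives this perturbation. As a preliminary, I would fix a convenient isolating neighbourhood. Since $C$ is compact, Remark \ref{C compact} shows that $\l_*$ is an isolated eigenvalue and, together with (H3), that $H=\Im T\oplus C(\Ker T)$; moreover $C|_{\Ker T}$ is injective by Remark \ref{injective}. Because the contribution of the eigenset to $\deg_{bf}(\psi,\cdot)$ does not depend on the isolating neighbourhood (Definition \ref{definition contribution degree}), I may assume $U=(\a,\b)\per\S$, where $\l_*$ is the only eigenvalue of $Lx=\l Cx$ in $[\a,\b]$ and $L-\a C$, $L-\b C$ are invertible.

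Set $d=\dim\Ker T$, which is odd by (H2), and choose a linear operator $A\colon\Ker T\to\Ker T$ with exactly one real eigenvalue $r_*$, which is simple; such an $A$ exists since a real $d\times d$ matrix with $d$ odd has an odd number of real eigenvalues (for instance, take the companion matrix of $(z-r_*)\prod_{k=1}^{(d-1)/2}\bigl((z-a_k)^2+b_k^2\bigr)$ with all $b_k\neq0$). Let $P\colon G\to\Ker T$ be the orthogonal projection and put $R:=C\circ A\circ P\in\F(G,H)$, a finite-rank, hence compact, operator. For $\tau_0>0$ small enough, $h(\tau,\l,x):=(L+\tau R)x-\l Cx$, $(\tau,\l,x)\in[0,\tau_0]\per(\R\per\S)$, is a $\Phi_0$-homotopy (each partial map is $\Phi_0$ by property (F3) and the argument of Section \ref{Results 1}), which I orient by transporting the orientation of $h_0=\psi$ (Proposition \ref{orientation transport}); moreover $L+\tau R-\a C$ and $L+\tau R-\b C$ stay invertible, so, by the usual argument based on the compactness of $C$, the zero set of $h$ over $U$ is compact and contained in $[0,\tau_0]\per U$. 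The Homotopy Invariance property then gives $\deg_{bf}(\psi,U)=\deg_{bf}(\psi^{\tau_0},U)$, where $\psi^{\tau_0}(\l,x)=(L+\tau_0 R)x-\l Cx$ on $\R\per\S$.

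To compute $\deg_{bf}(\psi^{\tau_0},U)$ I would perform a Lyapunov--Schmidt reduction of the perturbed problem near $\l_*$, using $G=\Ker T\oplus(\Ker T)^{\perp}$, the isomorphism $T|_{(\Ker T)^{\perp}}\colon(\Ker T)^{\perp}\to\Im T$, and the projection $Q\colon H\to C(\Ker T)$ along $\Im T$. Writing $\l=\l_*+\nu$ and using that $QC|_{\Ker T}=C|_{\Ker T}$ is an isomorphism, the bifurcation equation on $\Ker T$ takes the form $\bigl(\tau A-\nu I+O((|\tau|+|\nu|)^2)\bigr)\xi=0$; hence, for $\tau>0$ small, its real singular values $\nu$ lie near $\tau$ times the real eigenvalues of $A$, and by the choice of $A$ there is exactly one, which is simple. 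Consequently, for $\tau_0$ small the perturbed problem $(L+\tau_0 R)x=\l Cx$ has in $(\a,\b)$ a single eigenvalue $\l_1$, with one-dimensional kernel $\R x_1$, $x_1\in\S$; and since $r_*$ is a simple root of the characteristic polynomial of $A$, the reduction also yields the transversality $Cx_1\notin\Im(L+\tau_0 R-\l_1 C)$. Thus $(\l_1,x_1)$ and $(\l_1,-x_1)$ are simple twin eigenpoints of the perturbed problem in the sense of Definition \ref{simple}, $U$ isolates their eigenset, and Theorem \ref{bf-degree simple eigenpoint} gives $\deg_{bf}(\psi^{\tau_0},U)=\pm2$. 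Therefore $\deg_{bf}(\psi,U)=\pm2\neq0$.

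I expect the main difficulty to lie in the third paragraph: making the Lyapunov--Schmidt reduction precise and, above all, checking that the surviving eigenpoint is \emph{simple in the strong sense} of Definition \ref{simple}, i.e.\ the transversality $Cx_1\notin\Im(L+\tau_0 R-\l_1 C)$, which is equivalent to the algebraic simplicity of $\l_1$ and should be traced, via the reduction, back to $r_*$ being a simple root of the characteristic polynomial of $A$. A secondary, more routine point is the admissibility (compactness of the zero set) of the homotopy, handled as usual through the compactness of $C$. Finally, it is worth noting that (H2) enters exactly where a real eigenvalue of $A$ is required: were $\dim\Ker T$ even, one could choose $A$ with no real eigenvalue, the perturbed problem would then have no eigenpoint in $U$, and $\deg_{bf}(\psi,U)$ would be zero — consistent with the failure of persistence in the even-multiplicity case.
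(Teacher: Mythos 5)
Your strategy is essentially the paper's own, dressed differently: the paper also replaces $L$ by a finite-rank perturbation supported on $\Ker T$ and composed with $C$ (its perturbed family $\overline\eta^{\,\e}_\l$ is precisely $Z(L+C(A^\e-\l_*I_{22})P-\l C)$ with $A^\e\in\L(\Ker T)$), transfers the degree across the perturbation by an admissible homotopy on $U=(\a,\b)\per\S$, and concludes via Theorem \ref{bf-degree simple eigenpoint}. The two real differences are these. First, the choice of the kernel perturbation and the way (H2) enters: the paper takes $A^\e$ with $n=\dim\Ker T$ real simple eigenvalues near $\l_*$, so the perturbed problem has $n$ simple eigensets in $U$, each contributing $\pm2$; since $n$ is odd the sum of $n$ terms equal to $\pm 2$ cannot vanish, and no control of the individual signs is needed. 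You take $A$ with exactly one real simple eigenvalue --- possible precisely because $\dim\Ker T$ is odd --- and get $\deg_{bf}(\psi,U)=\pm2$ outright; oddness is used to guarantee a surviving real eigenvalue rather than through a parity count, and as a bonus you pin down the value of the degree. Second, the bookkeeping: the paper first composes with the isomorphism $Z$ (blocks $T_{11}^{-1}$ and $C_{22}^{-1}$), and since (H3) forces the $(1,2)$ block of $C$ to vanish, the perturbed family is block lower-triangular with respect to $G=(\Ker T)^\perp\oplus\Ker T$; hence its eigenvalues in $[\a,\b]$, the one-dimensionality of the kernels, and the transversality condition (2) of Definition \ref{simple} are read off directly from $A^\e$, with no Lyapunov--Schmidt reduction. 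Your perturbation $R=CAP$ has exactly the same feature: $ZR$ vanishes on $(\Ker T)^\perp$ and acts as $A$ on $\Ker T$, so $Z(L+\tau R-\l C)$ is again block lower-triangular; for small $\tau>0$ its eigenvalues in $[\a,\b]$ are exactly the numbers $\l_*+\tau\mu$ with $\mu$ a real eigenvalue of $A$, the corresponding kernel is $\{0\}\oplus\Ker(A-\mu I)$, and the condition $Cx_1\notin\Im(L+\tau R-\l_1C)$ reduces to the algebraic simplicity of $r_*$. Exploiting this you can delete the Lyapunov--Schmidt paragraph --- the one step you rightly flag as only sketched (ruling out singular values with $\nu/\tau$ unbounded, and tracing the transversality through the reduction) --- and your argument closes cleanly, in a form that is then a minor variant of the paper's proof.
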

\begin{proof}
Because of the assumption $\Im T \cap C(\Ker T) = \{0\}$, as well as the fact that $T$ is Fredholm of index zero, we can split the spaces $G$ and $H$ as follows:
\[
G = G_1 \oplus G_2\;\; \text{with} \;\;G_1 = (\Ker T)^\perp\;\; \text{and}\;\; G_2 = \Ker T;
\]
\[
H = H_1 \oplus H_2\;\; \text{with} \;\;H_1 = \Im T \;\; \text{and}\;\; H_2 = C(\Ker T).
\]
With these splittings, $T$ and $C$ can be represented in block matrix form as follows:
\[
T =
\left(
\begin{array}{cc}
T_{11} & 0 \\[2ex]
0 & 0 
\end{array}
\right),
\quad C = 
\left(
\begin{array}{cc}
C_{11} & 0 \\[2ex]
C_{21} & C_{22} 
\end{array}
\right).
\]
The operators $T_{11}\colon G_1 \to H_1$ and $C_{22}\colon G_2 \to H_2$ are isomorphisms (the second one because of Remark \ref{injective}), while $C_{11}\colon G_1 \to H_1$ and $C_{21} \colon G_1 \to H_2$ are, respectively, compact and finite dimensional.

We can equivalently regard the equation $\overline\psi(\l,x) = 0$ as $Z\overline\psi(\l,x)=0$, where $Z\colon H \to G$ is an isomorphism.
We choose $Z$ as follows:
\[
Z =
\left(
\begin{array}{cc}
T_{11}^{-1} & 0 \\[2ex]
0 & C_{22}^{-1}
\end{array}
\right).
\]

Given any $\l \in \R$, the operator $\overline \psi_\l = L-\l C \in \L(G,H)$ can be written as $T - (\l-\l_*) C$.
Therefore, putting $\overline\eta = Z \overline \psi\colon \R\per G \to G$, the partial map $\overline\eta_\l\colon G \to G$ (see Notation \ref{slice and partial map}) can be represented as
\[
\overline\eta_\l = 
\left(
\begin{array}{cc}
I_{11} - (\l-\l_*) \widehat C_{11} & 0 \\[2ex]
- (\l-\l_*) \widehat C_{21} & \l_* I_{22}-\l I_{22}
\end{array}
\right),
\]

\noindent
where $I$ is the identity on $G = G_1 \oplus G_2$ and $\widehat C = ZC $ (observe that $\widehat C_{22}$ coincides with the identity $I_{22} \in \L(G_2)$).

This shows that, given any $\l \in \R$, the endomorphism $\overline\eta_\l \colon G \to G$ is a compact vector field.
Therefore, its Leray--Schauder degree on the unit ball $B$ of $G$ is well defined whenever $\l$ is not an eigenvalue of the equation $Lx = \l Cx$, and this happens when $\l$ is close to, but different from, $\l_*$.
Since $G_2$ is odd dimensional and, because of assumption (H3), the geometric and algebraic multiplicities of $\l_*$ coincide, the function $\l \mapsto \deg_{LS}(\overline\eta_\l,B)$ has a sign-jump crossing $\l_*$.
Therefore, if Conjecture \ref{conjecture} were true, we would have done.
So we need to proceed differently.

\smallskip
We consider an isolating neighborhood of the eigenset $\{\l_*\}\per \mathcal S_{\l_*}$ of the type $U = (\a,\b)\per\S$ and we approximate the family of operators $\overline \eta_\l$, $\l \in [\a,\b]$, with a family $\overline \eta_\l^{\,\e} \in \L(G)$, $\l \in [\a,\b]$, having in $(\a,\b)$ only simple eigenvalues; the number of them equal to the dimension of $G_2=\Ker T$.

\smallskip
First of all we point out that

\pallino the operator $I_{11} - (\l-\l_*)\widehat C_{11} \in \L(G_1)$ is invertible for all $\l \in [\a,\b]$,

\smallskip
\noindent
since otherwise the equation $Lx = \l Cx$ would have eigenvalues different from $\l_*$ in the interval $[\a,\b]$.

\smallskip
\noindent
Now, given $\e > 0$ such that $(\l_*-\e,\l_*+\e) \subset (\a,\b)$, we choose a linear operator $A^\e \in \L(G_2)$ with the following properties:

\pallino in the operator norm, the distance between $A^\e$ and $\l_*I_{22}$ is less than $\e$,

\pallino the eigenvalues of $A^\e$ are real and simple,

\pallino any eigenvalue $\l$ of $A^\e$ is such that $|\l - \l_*| < \e$.

\smallskip
\noindent
For any $\l \in \R$ we define $\overline\eta_\l^{\,\e} \in \L(G_1 \oplus G_2)$ by
\[
\overline\eta_\l^{\,\e} = 
\left(
\begin{array}{cc}
I_{11} - (\l-\l_*) \widehat C_{11} & 0 \\[2ex]
- (\l-\l_*) \widehat C_{21} & A^\e-\l I_{22}
\end{array}
\right).
\]
Then, any eigenvalue $\l$ of $A^\e$ is as well an eigenvalue of the equation $\overline \eta_\l^{\,\e}(x) = 0$, and viceversa provided that $\l \in [\a,\b]$.
Therefore, $\overline \eta_\l^{\,\e}(x) = 0$ has exactly $n = \dim(G_2)$ simple eigenvalues in the interval $(\a,\b)$.
Consequently, the function
\[
\eta^\e \colon \R\per\S \to G, \quad (\l,x) \mapsto \overline\eta_\l^{\,\e}(x)
\]
has exactly $n$ eigensets in the open subset $U = (\a,\b)\per\S$ of the cylinder $\R\per\S$, all of them corresponding to a simple eigenvalue.
Therefore, according to Theorem \ref{bf-degree simple eigenpoint}, the contribution of each of them to $\deg_{bf}(\eta^\e,U)$ is either $2$ or $-2$.
Consequently, taking into account that $n$ is odd, one gets $\deg_{bf}(\eta^\e,U) \neq 0$.

Let the isomorphism $Z$ be naturally oriented and let the restriction $\eta$ of $\overline\eta$ to the manifold $\R\per\S$ 
be oriented according to the composition $Z \psi$.
Thus, because of the topological invariance property of the $bf$-degree, we get
\[
\deg_{bf}(\eta,U) = \deg_{bf}(\psi,U).
\]
Hence, it remains to show that, if $\e > 0$ is sufficiently small, then
\[
\deg_{bf}(\eta^\e,U) = \deg_{bf}(\eta,U).
\]
In fact, this is a consequence of the homotopy invariance property of the $bf$-degree.
To see this it is sufficient to show that (if $\e$ is small) the homotopy $h \colon [0,1]\per\overline U \to G$, defined by $h(t,\l,x) = t\eta^\e(\l,x) + (1-t)\eta(\l,x)$, is admissible.
That is,
{\small
\[
h(t,\l,x) \neq 0\; \text{for}\; t \in [0,1]\; \text{and}\; (\l,x) \in \partial U = \{(\l,x) \in [\a,\b]\per\S: \l=\a\; \text{or}\; \l =\b\}.
\]
}
Let us prove that this is true for the left boundary of $U$; that is, for $\l=\a$.
The argument for $\l = \b$ will be the same.

We need to show that (if $\e$ is small) the linear operator $A_t = t\overline\eta_\a^{\,\e} + (1-t)\overline\eta_\a$ of $\L(G)$ is invertible for any $t \in [0,1]$.
In fact, since $A_0 = \overline\eta_\a$ is invertible, and the set of the invertible operators of $\L(G)$ is open, this holds true for all $A_t$ provided that $\e$ is sufficiently small.
\end{proof}

\section{The perturbed eigenvalue problem and global continuation}
\label{Results 2}

Here, as in Section \ref{Results 1}, $G$ and $H$ denote two real Hilbert spaces, $L, C \colon G \to H$ are bounded linear operators, $\S$ is the unit sphere of $G$ and, as in problem \eqref{eigenvalue problem}, the operator $L -\l C$ is invertible for some $\l \in \R$.

Consider the perturbed eigenvalue problem
\begin{equation}
\label{perturbed eigenvalue problem}
\left\{
\begin{aligned}
&Lx + s N(x) = \l Cx\\
&x \in \S,
\end{aligned}\right.
\end{equation}
where $N\colon \S \to H$ is a $C^1$ compact map and $s$ is a real parameter.

\medskip
A \emph{solution} of \eqref{perturbed eigenvalue problem} is a triple $(s,\l,x) \in \R\per\R\per\S$ satisfying \eqref{perturbed eigenvalue problem}.
The element \mbox{$x \in \S$} is a \emph{unit eigenvector} corresponding to the \emph{eigenpair} $(s,\l)$.

The set of solutions of \eqref{perturbed eigenvalue problem} will be denoted by $\Sigma$ and $\mathcal E$ is the subset of $\R^2$ of the eigenpairs.
Notice that $\mathcal E$ is the projection of $\Sigma$ into the $s\l$-plane and the $s=0$ slice $\Sigma_0$ of $\Sigma$ is the same as the set $\mathcal S = \psi^{-1}(0)$ of the eigenpoints of \eqref{eigenvalue problem}, where $\psi$ has been defined in the previous section.

A solution $(s,\l,x)$ of \eqref{perturbed eigenvalue problem} is regarded as \emph{trivial} if $s=0$.
In this case $p=(\l,x)$ is the \emph{corresponding eigenpoint} of problem \eqref{eigenvalue problem}.
When $p$ is simple, the triple $(0,\l,x) \in \Sigma$ will be as well said to be \emph{simple}.
A nonempty subset of $\Sigma$ of the type $\{0\}\per\{\l\}\per\mathcal S_{\l}$ will be called a \emph{solution-sphere}.

We consider the subset $\{(s,\l,x) \in \Sigma: s=0\} = \{0\}\per\Sigma_0 = \{0\}\per\mathcal S$ of the trivial solutions of $\Sigma$ as a \emph{distinguished subset}.
Thus, it makes sense to call a solution $q_* =(0,\l_*,x_*)$ of \eqref{perturbed eigenvalue problem} a \emph{bifurcation point} if any neighborhood of $q_*$ in $\Sigma$ contains nontrivial solutions.

We say that a bifurcation point $q_*=(0,\l_*,x_*)$ is \emph{global} (in the sense of Rabinowitz \cite{Ra}) if in the set of nontrivial solutions there exists a connected component, called \emph{global (bifurcating) branch}, whose closure in $\Sigma$ contains $q_*$ and it is either unbounded or includes a trivial solution $q^*=(0,\l^*,x^*)$ with $\l^* \neq \l_*$.
In the second case $q^*$ is as well a global bifurcation point.

A meaningful case is when a bifurcation point $q_* = (0,\l_*,x_*)$ belongs to a connected solution-sphere $\{0\}\per\{\l_*\}\per\mathcal S_{\l_*}$.
In this case the dimension of $\mathcal S_{\l_*}$ is positive and we will simply say that $x_*$ is a bifurcation point.
In fact, $0$ and $\l_*$ being known, $x_*$ can be regarded as an \emph{alias} of $q_*$.

For a necessary condition as well as some sufficient conditions for a point $x_*$ of a connected eigensphere to be a bifurcation point see \cite{ChFuPe1}.
Other results regarding the existence of bifurcation points belonging to even-dimensional eigenspheres can be found in \cite{BeCaFuPe-s1, BeCaFuPe-s2, BeCaFuPe-s3, BeCaFuPe-s4, BeCaFuPe-s5, BeCaFuPe-s6, ChFuPe2, ChFuPe3, ChFuPe5}.

\medskip
As already pointed out, if the operator $C$ is compact, then $\psi \colon \R\per\S \to H$ is Fredholm of index zero, and this is crucial for the global results regarding the perturbed eigenvalue problem \eqref{perturbed eigenvalue problem}.
Because of this,
\begin{itemize}
\item
\textbf{from now on, unless otherwise stated, we will tacitly assume that the linear operator $C$ is compact}.
\end{itemize}

We define the $C^1$-map
\[
\psi^+\colon \R\per\R\per\S \to H, \quad (s,\l,x) \mapsto \psi(\l,x) + sN(x),
\]
in which $\psi\colon \R\per\S \to H$, as in Section \ref{Results 1}, is given by $\psi(\l,x) = Lx - \l Cx$.
Therefore the set $(\psi^+)^{-1}(0)$ of the zeros of $\psi^+$ coincides with $\Sigma$.

As shown in \cite{BeCaFuPe-s6}, because of the compactness of $C$ and $N$, one gets that
\begin{itemize}
\item
\emph{$\psi^+$ is proper on any bounded and closed subset of its domain.}
\end{itemize}
Consequently, any bounded connected component of $\Sigma$ is compact.
This fact will be useful later.

\medskip
Notice that $\psi^+$ is the restriction to the manifold $\R\per\R\per\S$ of the nonlinear map
\[
\overline\psi^{\,+}\colon \R\per\R\per G \to H, \quad (s,\l,x) \mapsto \overline\psi(\l,x) + s\overline N(x),
\]
where $\overline\psi$ is as in Section \ref{Results 1} and $\overline N$ is the positively homogeneous extension of $N$.

\medskip
The following result of \cite{BeCaFuPe-s6} is crucial for proving the existence of global bifurcation points.

\begin{theorem}
\label{continuation 1}
Given an open subset $\O$ of $\,\R\per\R\per\S$, let
\[
\O_0=\big\{(\l,x) \in \R\per\S: (0,\l,x) \in \O \big\}
\]
be its $0$-slice.
If $\deg_{bf}(\psi,\O_0)$ is well defined and nonzero, then $\O$ contains a connected set of nontrivial solutions whose closure in $\O$ is non-compact and meets at least one trivial solution of \eqref{perturbed eigenvalue problem}.
\end{theorem}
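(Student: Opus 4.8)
The plan is to argue by contradiction, reducing the assertion to the computation of the $bf$-degree of $\psi$ on the slice $\O_0$, which is nonzero by hypothesis. Throughout, write $\Sigma^{*}=\{(s,\l,x)\in\Sigma:\ s\neq 0\}$ for the set of nontrivial solutions of \eqref{perturbed eigenvalue problem}, and $\mathcal T=\{0\}\per\big(\O_0\cap\psi^{-1}(0)\big)$ for the set of the trivial solutions contained in $\O$; since $\deg_{bf}(\psi,\O_0)$ is well defined, the triple $(\psi,\O_0,0)$ is admissible and hence $\mathcal T$ is compact. We shall use that $\Sigma=(\psi^{+})^{-1}(0)$ is closed in $\R\per\R\per\S$, that $\psi^{+}$ is proper on bounded closed subsets of its domain (so bounded closed subsets of $\Sigma$ are compact), and that, $C$ being compact, the eigensets of \eqref{eigenvalue problem} are compact and form a discrete family, so that the set $\mathcal S=\psi^{-1}(0)$ has only compact connected components (Remark~\ref{C compact}).

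\emph{Step 1: a separating neighbourhood.} Assume, for contradiction, that $\O$ contains no connected set of nontrivial solutions whose closure in $\O$ is non-compact and meets $\mathcal T$. Exhausting $\O$ by bounded open sets $\mathcal W_n$ with $\overline{\mathcal W_n}\subseteq\O$, on each of which $\Sigma$ is compact, one applies a Whyburn-type separation lemma to the compact sets $\Sigma\cap\overline{\mathcal W_n}$: either, for every $n$, some component of $\Sigma\cap\overline{\mathcal W_n}$ meets both $\mathcal T$ and $\partial\mathcal W_n$ — and then a standard limiting argument produces a connected subset of $\Sigma$, lying in $\O$, meeting $\mathcal T$, and with non-compact closure in $\O$, which, since the components of $\mathcal S$ are compact, must contain a connected set of genuinely nontrivial solutions, contradicting our assumption — or, for some $n$, there is a bounded open $V$ with $\overline V\subseteq\O$, $\mathcal T\subseteq V$ and $\partial V\cap\Sigma=\emptyset$. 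Only the second alternative survives. Then $\mathcal K:=V\cap\Sigma=\overline V\cap\Sigma$ is a compact, relatively open (hence isolated) subset of $\Sigma$ containing every trivial solution in $\O$; choosing $V$ to be a sufficiently thin tubular neighbourhood of $\mathcal K$ we may also assume that, for every $s\in\R$, the $s$-slice $V_s$ of $V$ contains the $s$-slice of $\mathcal K$, and that the slices vary in a product-like manner for $s$ near any fixed value. Finally, since $V\subseteq\O$ and $\psi^{-1}(0)\cap V_0=\O_0\cap\psi^{-1}(0)$, the excision property gives $\deg_{bf}(\psi,V_0)=\deg_{bf}(\psi,\O_0)\neq 0$.

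\emph{Step 2: propagation of the degree in $s$.} For every $s\in\R$ the partial map $\psi^{+}_{s}=\psi(\cdot)+sN(\cdot)\colon\R\per\S\to H$ is a compact perturbation of $\psi$, hence a $\Phi_0$-map, and $s\mapsto\psi^{+}_{s}$ is an oriented $\Phi_0$-homotopy, whose orientation is the one induced by that of $\psi$ (Proposition~\ref{orientation transport}). Since $\partial V\cap\Sigma=\emptyset$, for each $s$ the triple $(\psi^{+}_{s},V_s,0)$ is admissible, its zero set being the compact $s$-slice of $\mathcal K$ contained in $V_s$. Fixing $s_0\in\R$ and a slightly narrower tube $W\subseteq V_{s_0}$ still containing the $s_0$-slice of $\mathcal K$, the zero set of $(s,q)\mapsto\psi^{+}_{s}(q)$ over $[s_0-\d,s_0+\d]\per W$ is compact for $\d$ small; excision and homotopy invariance of the $bf$-degree then show that $s\mapsto\deg_{bf}(\psi^{+}_{s},V_s)$ is locally constant, hence constant on $\R$, with value $\deg_{bf}(\psi^{+}_{0},V_0)=\deg_{bf}(\psi,V_0)\neq 0$.

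\emph{Step 3: conclusion, and the main obstacle.} By the existence property of the $bf$-degree, for every $s\in\R$ the equation $\psi^{+}_{s}(q)=0$ has a solution $q\in V_s$, i.e.\ $(s,q)\in\Sigma\cap V=\mathcal K$; hence the projection of $\mathcal K$ onto the $s$-axis is the whole of $\R$, contradicting the compactness of $\mathcal K$. Therefore the branch asserted in the statement exists. (Should one also wish to exclude the degenerate case in which no point of $\mathcal T$ lies in the closure of $\Sigma^{*}\cap\O$, it suffices to note that a sufficiently thin tube around $\mathcal T$ would then be a separating neighbourhood as in Step~1, with $V\cap\Sigma=\mathcal T$, so that Steps~2--3 would apply verbatim and yield the same contradiction.) I expect the genuinely delicate point to be Step~1: since $\R\per\R\per\S$ is not locally compact when $G$ is infinite dimensional, the separation of $\Sigma$ into ``the part attached to $\mathcal T$'' and ``the rest'' must be carried out on the compact pieces $\Sigma\cap\overline{\mathcal W_n}$ and then patched, and the construction of the tubular neighbourhood $V$ with $\partial V\cap\Sigma=\emptyset$, slices of product type, and $\overline V\subseteq\O$ is where properness of $\psi^{+}$ and the discreteness of the eigensets are essentially used.
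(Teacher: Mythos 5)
First, a contextual remark: the paper does not prove Theorem \ref{continuation 1} at all — it is quoted from \cite{BeCaFuPe-s6} — so the comparison below is with the structure of that cited proof. Your Steps 2--3 reproduce its degree-theoretic half correctly: once a bounded open $V$ with $\overline V\subseteq\O$, $\mathcal T\subseteq V$ and $\partial V\cap\Sigma=\emptyset$ is available, excision gives $\deg_{bf}(\psi,V_0)=\deg_{bf}(\psi,\O_0)\neq 0$, the generalized homotopy invariance in $s$ (a tube-lemma argument using only the compactness of $\mathcal K=\overline V\cap\Sigma$; no ``product-like'' slices are needed) makes $s\mapsto\deg_{bf}(\psi^+_s,V_s)$ constant, and the existence property then contradicts the boundedness of $V$. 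That part is fine.

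The genuine gap is in Step 1, precisely the point you flag as delicate. Two things are missing. (a) The ``standard limiting argument'' turning the components $C_n$ of $\Sigma\cap\overline{\mathcal W_n}$ that meet both $\mathcal T$ and $\partial\mathcal W_n$ into a single connected subset of $\Sigma\cap\O$ with non-compact closure is not supplied; limits of continua are connected only under compactness hypotheses not available globally here, so this requires the quasi-component/local-compactness machinery, not just Whyburn's lemma on each compact piece. (b) More seriously, even granting such a connected set $D\subseteq\Sigma\cap\O$ meeting $\mathcal T$ with non-compact closure, the theorem demands a connected set of \emph{nontrivial} solutions, and your one-line justification (``since the components of $\mathcal S$ are compact'') does not deliver it: a priori $D\setminus\mathcal T$ could split into infinitely many components, each with compact closure, glued to one another only through $\mathcal T$, and the compactness of the eigensets is irrelevant to excluding this. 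What rules it out is exactly the point-set lemma on which the proofs in \cite{BeCaFuPe-s2,BeCaFuPe-s6} rest: if $Y$ is a locally compact metric space, $K\subseteq Y$ is compact, and no compact relatively open subset of $Y$ contains $K$, then $Y\setminus K$ contains a connected set whose closure in $Y$ is non-compact and intersects $K$. Applied with $Y=\Sigma\cap\O$ (locally compact thanks to the properness of $\psi^{+}$ on bounded closed sets) and $K=\mathcal T$ (compact because $(\psi,\O_0,0)$ is admissible), this lemma simultaneously replaces your exhaustion/limiting argument and performs the extraction of a purely nontrivial branch, while its failure hands you the compact clopen set from which the isolating neighbourhood $V$ is built. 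Without this lemma, or an equivalent argument, your first horn does not actually contradict the contradiction hypothesis, so the proof as written is incomplete, even though its overall architecture coincides with that of the cited proof.
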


Corollary \ref{compact component} below, which was deduced in \cite{BeCaFuPe-s6} from Theorem \ref{continuation 1}, asserts that the contribution to the $bf$-degree of the $0$-slice of any compact (connected) component of $\Sigma$ is null.
We will need this basic property later.

\begin{corollary}
\label{compact component}
Let $\mathcal D$ be a compact component of $\Sigma$, and let $\mathcal D_0 \subset \R\per\S$ be its (possibly empty) $0$-slice.
Then, if $U \subset \R\per\S$ is an isolating neighborhood of $\mathcal D_0$, one has $\deg_{bf}(\psi,U) = 0$.
\end{corollary}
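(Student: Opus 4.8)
The plan is to argue by contradiction, combining a Whyburn-type separation of the compact component $\mathcal D$ from the rest of $\Sigma$ with Theorem \ref{continuation 1} and the excision property of the $bf$-degree.

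First I would separate $\mathcal D$ from $\Sigma\setminus\mathcal D$. Since $\psi^+$ is proper on bounded closed subsets of its domain, $\Sigma=(\psi^+)^{-1}(0)$ is closed and its bounded closed subsets are compact; hence $\mathcal D$, being a component of $\Sigma$, is also a component of the compact set $\Sigma\cap\overline B$ for any closed ball $\overline B$ of $\R\per\R\per\S$ containing $\mathcal D$ in its interior. The standard separation lemma for components of compact metric spaces then provides, inside any prescribed open neighbourhood of $\mathcal D$, a relatively clopen subset of $\Sigma$ containing $\mathcal D$; transferring it to the ambient manifold yields a bounded open set $\O\subseteq\R\per\R\per\S$ with $\mathcal D\subseteq\O$, $\overline\O$ bounded, $\O\cap\Sigma=\overline\O\cap\Sigma$ compact and, in particular, $\partial\O\cap\Sigma=\emptyset$. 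Moreover $\mathcal D_0$ is an isolated subset of $\mathcal S$ — it is compact, and by the discreteness of the eigenvalues (Remark \ref{C compact}) together with the fact that each positive-dimensional eigensphere, being connected, is contained in $\mathcal D$ as soon as it meets it, $\mathcal D_0$ is relatively open in $\mathcal S$ — so the closed set $\{0\}\per(\mathcal S\setminus\mathcal D_0)$ is disjoint from $\mathcal D$, and the separating set above may be required to avoid it. This forces $\O_0\cap\mathcal S=\mathcal D_0$; that is, $\O_0$ is an isolating neighbourhood of $\mathcal D_0$, so $\deg_{bf}(\psi,\O_0)$ is well defined.

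Next I would show $\deg_{bf}(\psi,\O_0)=0$. Suppose not. By Theorem \ref{continuation 1}, $\O$ contains a connected set $\mathcal C$ of nontrivial solutions of \eqref{perturbed eigenvalue problem} whose closure in $\O$ is non-compact. But $\mathcal C\subseteq\O\cap\Sigma=\overline\O\cap\Sigma$, which is closed in $\R\per\R\per\S$ and compact; therefore the closure of $\mathcal C$ in $\R\per\R\per\S$ is compact and contained in $\overline\O\cap\Sigma\subseteq\O$, so it coincides with the closure of $\mathcal C$ in $\O$ — contradicting non-compactness. Hence $\deg_{bf}(\psi,\O_0)=0$.

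Finally, since $\O_0$ and the given $U$ are both isolating neighbourhoods of the isolated subset $\mathcal D_0$ of $\mathcal S=\psi^{-1}(0)$, the excision property underlying Definition \ref{definition contribution degree} gives $\deg_{bf}(\psi,U)=\deg_{bf}(\psi,\O_0)=0$, which is the assertion. I expect the only genuinely delicate point to be the separation in the first step: arranging that $\O$ be bounded, carry no solutions on its boundary, and have $0$-slice exactly isolating $\mathcal D_0$. This rests on the properness of $\psi^+$ (hence on the compactness of $\Sigma\cap\overline B$) and on Remark \ref{C compact}; the separation lemma itself is the standard device behind Rabinowitz-type global continuation, already used in this form in \cite{BeCaFuPe-s6}.
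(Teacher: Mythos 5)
Your proposal is correct and follows the route the paper itself indicates: Corollary \ref{compact component} is stated here without proof, as having been deduced in \cite{BeCaFuPe-s6} from Theorem \ref{continuation 1}, and your argument --- properness of $\psi^+$ making the bounded component $\mathcal D$ compact, a Whyburn-type clopen separation yielding a bounded open set $\O$ with $\partial\O\cap\Sigma=\emptyset$ whose $0$-slice isolates $\mathcal D_0$, Theorem \ref{continuation 1} then forcing $\deg_{bf}(\psi,\O_0)=0$, and excision transferring this value to the given $U$ --- is exactly that deduction. The delicate points you flag are handled correctly: the discreteness of the eigenvalues (Remark \ref{C compact}) together with the connectedness of positive-dimensional eigenspheres makes $\mathcal D_0$ relatively open in $\mathcal S$, and including the closed set $\{0\}\per(\mathcal S\setminus\mathcal D_0)$ among the sets the separating neighbourhood must avoid is precisely what guarantees $\O_0\cap\mathcal S=\mathcal D_0$.
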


The following result, obtained in \cite[Theorem 4.5]{BeCaFuPe-s6}, regards the existence of a global branch of solutions emanating from a trivial solution of problem \eqref{perturbed eigenvalue problem} which corresponds to a simple eigenpoint of \eqref{eigenvalue problem}.
\begin{theorem}
\label{main result in s6}
If $(\l_*,x_*)$ is a simple eigenpoint of problem \eqref{eigenvalue problem}, then, in the set $\Sigma$ of the solutions of \eqref{perturbed eigenvalue problem}, the connected component containing $(0,\l_*,x_*)$ is either unbounded or includes a trivial solution $(0,\l^*,x^*)$ with $\l^* \not= \l_*$.
\end{theorem}

We are now ready to prove our main result, which extends Theorem \ref{main result in s6} and provides a global version of Theorem 3.9 in \cite{ChFuPe5}, the latter concerning the existence of local bifurcation points belonging to even dimensional eigenspheres.

\begin{theorem}
\label{main result}
In addition to the compactness of $C$, let $(\l_*,x_*)$ be an eigenpoint of \eqref{eigenvalue problem} and denote by $T$ the non-invertible operator $L-\l_*C$.
Assume that
\begin{itemize}
\item
$\Ker T$ is odd dimensional,
\item
$\Im T \cap C(\Ker T) = \{0\}$.
\end{itemize}
Then, in the set $\Sigma$ of the solutions of \eqref{perturbed eigenvalue problem}, the connected component containing $(0,\l_*,x_*)$ is either unbounded or includes a trivial solution $(0,\l^*,x^*)$ with $\l^* \not= \l_*$.
\end{theorem}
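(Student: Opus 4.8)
The plan is to argue by contradiction, first treating separately the case already covered by Theorem~\ref{main result in s6}, and then playing Theorem~\ref{degree theorem} against Corollary~\ref{compact component}. Suppose first that $\dim\Ker T = 1$, so that $\Ker T = \R x_*$. By Remark~\ref{injective} the vector $Cx_*$ is nonzero, hence $C(\Ker T)=\R Cx_*$, and the assumption $\Im T\cap C(\Ker T)=\{0\}$ forces $Cx_*\notin\Im T$; since $C$ is compact, $T=L-\l_*C$ is Fredholm of index zero (Remark~\ref{C compact}). Thus $(\l_*,x_*)$ is a simple eigenpoint in the sense of Definition~\ref{simple}, and the conclusion is exactly Theorem~\ref{main result in s6}. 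So from now on I may assume $\dim\Ker T\ge 3$; then the eigensphere $\mathcal S_{\l_*}=\S\cap\Ker T$ is a connected sphere of dimension at least $2$, and consequently the solution-sphere $\{0\}\per\{\l_*\}\per\mathcal S_{\l_*}\subseteq\Sigma$ is connected.

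Next, assume for contradiction that the conclusion fails: let $\mathcal C$ be the connected component of $\Sigma$ containing $q_*=(0,\l_*,x_*)$, and suppose $\mathcal C$ is bounded and contains no trivial solution $(0,\l^*,x^*)$ with $\l^*\ne\l_*$. Since $\psi^+$ is proper on bounded closed subsets of its domain, $\mathcal C$ is compact. The solution-sphere $\{0\}\per\{\l_*\}\per\mathcal S_{\l_*}$ is connected and meets $\mathcal C$ at $q_*$, hence is contained in $\mathcal C$; conversely, any trivial solution in $\mathcal C$ is an eigenpoint $(0,\l,x)$ with $x\in\mathcal S_\l$, and by the standing assumption $\l=\l_*$, so it lies in $\{0\}\per\{\l_*\}\per\mathcal S_{\l_*}$. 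Therefore the $0$-slice $\mathcal C_0\subseteq\R\per\S$ of $\mathcal C$ equals the eigenset $\{\l_*\}\per\mathcal S_{\l_*}$. Because $C$ is compact, the eigenvalues of $Lx=\l Cx$ are discrete (Remark~\ref{C compact}), so $\{\l_*\}\per\mathcal S_{\l_*}$ is compact and relatively open in $\mathcal S=\psi^{-1}(0)$, i.e.\ it is an isolated subset of $\psi^{-1}(0)$.

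Now fix an isolating neighborhood $U\subseteq\R\per\S$ of $\{\l_*\}\per\mathcal S_{\l_*}$ among the eigenpoints. On the one hand, $\mathcal C$ is a compact component of $\Sigma$ with $0$-slice $\mathcal C_0=\{\l_*\}\per\mathcal S_{\l_*}$, so Corollary~\ref{compact component} yields $\deg_{bf}(\psi,U)=0$. On the other hand, hypotheses (H1)--(H3) of Theorem~\ref{degree theorem} hold verbatim under the present assumptions, so $\deg_{bf}(\psi,U)\ne 0$. This contradiction proves the theorem.

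I expect the only delicate point to be the bookkeeping on the $0$-slice: one must be sure that the component $\mathcal C$ really contains the \emph{entire} eigenset $\{\l_*\}\per\mathcal S_{\l_*}$ (which is why the case $\dim\Ker T=1$, where $\mathcal S_{\l_*}$ is a pair of points and this can fail, is handled separately through Theorem~\ref{main result in s6}), and that this eigenset is genuinely an isolated zero set of $\psi$, so that the $bf$-degree quantities in Theorem~\ref{degree theorem} and in Corollary~\ref{compact component} refer to one and the same admissible triple.
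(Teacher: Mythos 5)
Your proposal is correct and follows essentially the same route as the paper: for $\dim\Ker T>1$ you argue, exactly as the authors do, that a bounded component would be compact with $0$-slice equal to the whole eigenset $\{\l_*\}\per\mathcal S_{\l_*}$, and you then play Theorem~\ref{degree theorem} against Corollary~\ref{compact component} on a common isolating neighborhood. The only (inessential) difference is in the case $\dim\Ker T=1$, which you settle by checking simplicity of $(\l_*,x_*)$ and citing Theorem~\ref{main result in s6}, whereas the paper re-runs the degree argument directly via Theorem~\ref{bf-degree simple eigenpoint} and Corollary~\ref{compact component}.
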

\begin{proof}
Because of the compactness of $C$, according to Remark \ref{C compact}, the operator $L-\l C$ is Fredholm of index zero for all $\l \in \R$.
Moreover, the set of the eigenvalues of problem \eqref{eigenvalue problem} is discrete.
Consequently, the eigenset $\{\l_*\}\per \mathcal S_{\l_*}$, which is compact and nonempty, is relatively open in the set $\mathcal S$ of the eigenpoints.
Thus, it admits an isolating neighborhood $U \subset \R\per\S$ and, therefore, $\deg_{bf}(\psi,U)$ is well defined.

\medskip
Denote by $\mathcal D$ the connected component of $\Sigma$ containing $(0,\l_*,x_*)$.
We may assume that $\mathcal D$ is bounded.
Thus, it is actually compact, since $\psi^+$ is proper on any bounded and closed subset of $\R\per\R\per\S$.
We need to prove that $\mathcal D$ contains a trivial solution $(0,\l^*,x^*)$ with $\l^* \not= \l_*$.

Assume, by contradiction, that this is not the case.
Then the $0$-slice $\mathcal D_0$ of $\mathcal D$ is contained in the eigenset $\{\l_*\}\per \mathcal S_{\l_*}$.
We will show that this contradicts Corollary \ref{compact component}.
We distinguish two cases: $n = 1$ and $n > 1$, where $n$ is the dimension of $\Ker T$.

\smallskip
\emph{Case $n=1$.}
Because of the assumption $\Im T \cap C(\Ker T) = \{0\}$, the eigenpoint $p_* = (\l_*,x_*)$ is simple and $\{\l_*\}\per \mathcal S_{\l_*}$ has only two points: $p_* = (\l_*,x_*)$ and $\bar p_* = (\l_*,-x_*)$.
In this case, according to Theorem \ref{bf-degree simple eigenpoint}, the contribution to the \mbox{$bf$-degree} of any subset of $\{\l_*\}\per \mathcal S_{\l_*}$ is different from zero, and this, having assumed $\mathcal D_0 \subseteq \{\l_*\}\per \mathcal S_{\l_*}$, is incompatible with Corollary \ref{compact component}.

\smallskip
\emph{Case $n>1$.}
The solution-sphere $\{0\}\per\{\l_*\}\per \mathcal S_{\l_*}$ is connected and, consequently, it is contained in the component $\mathcal D$ of $\Sigma$.
Thus, the eigenset $\{\l_*\}\per \mathcal S_{\l_*}$ is contained in the slice $\mathcal D_0$ of $\mathcal D$.
Having assumed $\mathcal D_0 \subseteq \{\l_*\}\per \mathcal S_{\l_*}$, we get $\mathcal D_0 = \{\l_*\}\per \mathcal S_{\l_*}$.
Hence, because of Theorem \ref{degree theorem}, given an isolating neighborhood $U$ of $\mathcal D_0$, one gets $\deg_{bf}(\psi,U) \not= 0$, and we obtain a contradiction with Corollary~\ref{compact component}.
\end{proof}

\begin{remark}
\label{bifurcation point}
Under the notation and assumptions of Theorem \ref{main result} suppose, in addition, that $\dim(\Ker T) > 1$.
Then, the connected component $\mathcal D$ containing $(0,\l_*,x_*)$ contains as well the connected solution-sphere $\mathcal D_* = \{0\}\per\{\l_*\}\per\mathcal S_{\l_*}$.

This implies that there exists at least one point $\hat q = (0,\l_*,\hat x) \in \mathcal D_*$ which is in the closure $\overline {\mathcal D \setminus \mathcal D_*}$ of the difference $\mathcal D \setminus \mathcal D_*$.
Thus, $\hat q$ (or, equivalently, its alias $\hat x \in \mathcal S_{\l_*}$) is a global bifurcation point.
\end{remark}

\section{Some illustrating examples}
\label{Examples}

In this section we provide three examples in $\ell^2$ concerning Theorem \ref{main result}.
The dimensions of $\Ker T$ (where $T = L-\l_*C$) are, respectively, $3$, $2$, and $1$.
The second example, in which $\Ker T$ is two dimensional, shows that in Theorem \ref{main result}, as well as in Remark \ref{bifurcation point}, the hypothesis of the odd dimensionality of $\Ker T$ cannot be removed, the other assumptions remaining valid.

\medskip
Given a positive integer $k$, let $T_k \in \L(\ell^2)$ be the bounded linear operator that to any $x=(\xi_1,\xi_2,\xi_3,\dots) \in \ell^2$ associates the element
\[
T_k x=(0,0,\dots,0,\xi_{k+1},\xi_{k+2},\dots),
\]
in which the first $k$ components are $0$.
Notice that $T_k$ is Fredholm of index zero and its kernel is the $k$-dimensional space
\[
\Ker T_k =\{x\in \ell^2: x=(\xi_1,\xi_2,\dots, \xi_k,0,0,\dots)\},
\]
which is orthogonal to $\Im T_k$.

Hereafter, $C$ will be the well-known compact linear operator defined by
\[
(\xi_1,\xi_2,\xi_3,\dots) \mapsto (\xi_1/1,\xi_2/2,\xi_3/3,\dots,\xi_n/n,\dots).
\]

Given any compact (possibly nonlinear) map $N\colon \ell^2 \to \ell^2$ of class $C^1$, consider the perturbed eigenvalue problem
\begin{equation}
\label{generic example}
\left\{
\begin{aligned}
T_kx + s N(x) & = \l Cx,\\
x & \in \S,
\end{aligned}\right.
\end{equation}
where $\S$ is the unit sphere of $\ell^2$.
As before, we denote by $\Sigma$ the set of solutions $(s,\l,x)$ of \eqref{generic example}.

\medskip
Observe that, for any $k \in \mathbb N$, $k\geq 1$, $\l_* = 0$ is an eigenvalue of the unperturbed equation $T_kx = \l Cx$ and the condition $\Im T_k \cap C(\Ker T_{k}) = \{0\}$ is satisfied.
Therefore, according to Theorem \ref{main result}, given any positive odd integer $k$, any compact perturbing map $N\colon \ell^2 \to \ell^2$ of class $C^1$, and any $x_* \in \S \cap \Ker T_k$, the connected component of $\Sigma$ containing $(0,0,x_*)$ is either unbounded or encounters a trivial solution $(0,\l^*,x^*)$ with $\l^* \neq 0$.

In the three examples below we will check whether or not the assertions of Theorem \ref{main result} and Remark \ref{bifurcation point} hold, by taking, for all of them, the same perturbing map.
Namely,
\[
N\colon \ell^2 \to \ell^2, \quad (\xi_1, \xi_2, \xi_3, \xi_4,\dots) \mapsto(-\xi_2,\xi_1,-\xi_4,\xi_3,0,0,0,\dots).
\]

\begin{example}[$k=3$]
\label{k=3}
The eigenvalues of the unperturbed equation $T_3x = \l Cx$ are $0,4,5,6,\dots$\;
The first one, $\l_* = 0$, has geometric and algebraic multiplicity 3 and all the other eigenvalues are simple.

A standard computation shows that, in the $s\l$-plane, the set $\mathcal E$ of the eigenpairs has a connected subset $\mathcal E_1$ satisfying the equation $3s^2 + (\l - 2)^2/4 = 1$, corresponding to eigenvectors of the type $(0,0, \xi_3,\xi_4,0,0,...)$.
The set $\mathcal E_1$ is an ellipse with center $(0,2)$ and half-axes $1/\sqrt{3}$ and $2$.
Observe that it includes the eigenpair $(s,\l)=(0,0)$.
All the other eigenpairs are the points of the horizontal lines $\l = 5$, $\l = 6$, $\l = 7$, etc.
Thus, the connected component in $\Sigma$ containing any trivial solution $(0,\l,x)$ with eigenvalue $\l \ge 5$ is unbounded, and this agrees with Theorem \ref{main result}.

The above ellipse can be parametrized by $s = (1/\sqrt{3})\sin\t$, $\l = 2(1-\cos\t)$, $\t \in [0,2\pi]$, and for any $\t$ in the open interval $(0,2\pi)$, the kernel of the equation
\[
T_3x + (1/\sqrt{3})\sin\t Nx - 2(1-\cos\t)Cx = 0
\]
is $1$-dimensional and spanned by the vector
\[
x(\t) = (0,0,(1/\sqrt{3})\sin\t, -(2/3)(1-\cos\t),0,0,\dots).
\]

Since $\mathcal E_1$ is bounded, so is the connected component $\mathcal D$ of $\Sigma$ containing the $2$\mbox{-}dimen\-sional solution-sphere $\mathcal D_* = \{0\}\per\{\l_*\}\per\mathcal S_{\l_*}$ (recall that $\l_*=0$).
As we shall see, $\mathcal D$ includes the twin trivial solutions $(0,\l^*,\pm x^*)$, where
\[
\l^* = 4 \quad \text{and}\quad x^* = x(\pi)/\|x(\pi)\| = (0,0,0,1,0,0\dots).
\]

According to Remark \ref{bifurcation point}, there exists at least one bifurcation point $\hat x \in \mathcal S_{\l_*}$.
Actually, in this case one gets exactly two (global) bifurcation points.
This is due to the fact that $\mathcal D \setminus \mathcal D_*$ has two disjoint ``twin'' branches whose closures meet the solution-sphere $\mathcal D_*$.
The branches can be parametrized with $\t \in (0,2\pi)$ as follows:
\begin{align*}
q(\t) &= \big((1/\sqrt{3})\sin\t, 2(1-\cos\t),x(\t)/\|x(\t)\|\big),\\
\bar q(\t) &= \big((1/\sqrt{3})\sin\t, 2(1-\cos\t),-x(\t)/\|x(\t)\|\big).
\end{align*}
Then, if the following limits exist:
\[
\lim_{\t \to 0}q(\t)\quad \text{and} \quad \lim_{\t \to 0}\bar q(\t),
\]
we get the bifurcation points (as elements of $\mathcal D_*$).
Equivalently, to find the aliases of these points (that is, the corresponding elements in the eigensphere $\mathcal S_{\l_*}$) we compute
\[
\pm \lim_{\t \to 0}(x(\t)/\|x(\t)\|)
\]
obtaining $\pm (0,0,1,0,0,\dots) \in \mathcal S_{\l_*}$.
In fact, to compute the limits, observe that $\sin\t = u_s(\t)\t$ and $2(1-\cos\t) = u_c(\t)\t^2 $, where $u_s$ and $u_c$ are continuous functions such that $u_s(0) = u_c(0) = 1$.
Hence, one quickly obtains
\begin{align*}
\lim_{\t \to 0} \frac{(1/\sqrt{3})\sin\t}{\sqrt{(1/3)\sin^2\t+ (4/9)(1-\cos\t)^2}} &= 1,\\
\lim_{\t \to 0} \frac{-(2/3)(1-\cos\t)}{\sqrt{(1/3)\sin^2\t+ (4/9)(1-\cos\t)^2}} &= 0.
\end{align*}
\end{example}

\begin{example}[$k=2$]
\label{k=2}
The eigenvalues of the unperturbed equation $T_2x = \l Cx$ are $0,3,4,5,6,\dots$\;
The first one, $\l_*=0$, has geometric and algebraic multiplicity 2 and all the others are simple.

As in Example \ref{k=3}, for any eigenvalue $\l \ge 5$, one gets an horizontal line of eigenpairs containing $(0,\l)$.
Moreover, as one can check, the trivial eigenpairs $(0,3)$ and $(0,4)$ are vertices of an ellipse of eigenpairs with center $(0,7/2)$ and half-axes $1/\sqrt{48}$ and $1/2$, corresponding, as in Example {\ref{k=3}}, to eigenvectors of the type $(0,0, \xi_3,\xi_4,0,0,...)$.
However, in a neighborhood of the origin of the $s\l$-plane there are no eigenpairs, except the isolated one $(0,0)$.
This means that the \emph{solution-circle} $\mathcal D_* = \{0\}\per\{\l_*\}\per\mathcal S_{\l_*}$ is an isolated subset of $\Sigma$.
Therefore, the assertions of Theorem \ref{main result} and Remark \ref{bifurcation point} do not hold in this case.
Moreover, according to Corollary \ref{compact component}, the contribution of $\mathcal D_*$ to the $bf$-degree of the map $\psi$ is zero.

In conclusion, in Theorem \ref{main result} and Remark \ref{bifurcation point}, the assumption that $\Ker T$ is odd dimensional cannot be removed.
\end{example}

\begin{example}[$k=1$]
\label{k=1}
In this case the eigenvalues of the unperturbed problem are $0,2, 3,4,5,\dots$\;
All of them are simple.
As in the previous two examples, the $s\l$-plane contains infinitely many horizontal lines of eigenpairs.
Their equations are $\l=5$, $\l=6$, $\l=7$,\! \dots\;

In addition to the horizontal lines, the set of the eigenpairs has two bounded components: an ellipse with center $(0,1)$ and half-axes $1/\sqrt{2}$ and $1$, therefore containing $(0,0)$ and $(0,2)$; and, as in Example \ref{k=2}, an ellipse joining $(0,3)$ with $(0,4)$, with center $(0,7/2)$ and half-axes $1/\sqrt{48}$ and $1/2$.

Finally, one can check that, in accord with Theorem \ref{main result}, given any one of the two points of the $0$-dimensional solution-sphere $\{0\}\per\{0\}\per\mathcal S_{0}$, its connected component in $\Sigma$ is bounded and contains a point of $\{0\}\per\{2\}\per\mathcal S_{2}$.
This agrees with Theorem~\ref{main result}.
\end{example}



\end{document}